\date{}
\author{Boris Bukh}
\title{An improvement of the Beck--Fiala theorem}
\newtheorem*{maintheorem}{Theorem}
\newaliascnt{lemma}{theorem}
\newtheorem{lemma}[lemma]{Lemma}
\crefname{lemma}{lemma}{lemmas}
\newaliascnt{proposition}{theorem}
\newtheorem{proposition}[proposition]{Proposition}
\crefname{proposition}{proposition}{propositions}
\newcommand*{\abs}[1]{\lvert #1\rvert}                
\newcommand*{\eqdef}{\stackrel{\text{\tiny{def}}}{=}} 
\newcommand*{\R}{\mathbb{R}}                          
\newcommand*{\F}{\mathcal{F}}                         
\newcommand*{\B}{\mathcal{B}}                         
\newcommand*{\G}{\mathcal{G}}                         
\newcommand*{\C}{\mathcal{C}}                         
\newcommand*{\Cb}{\mathcal{D}}                        
\newcommand*{\Nb}{\mathcal{N}}                        
\newcommand*{\Pb}{\mathcal{P}}                        
\newcommand*{\eps}{\varepsilon}
\DeclareMathOperator{\disc}{disc}                     
\DeclareMathOperator{\Fr}{Fr}                         
\DeclareMathOperator{\Fl}{Fl}                         
\DeclareMathOperator{\Sz}{Sz}                         
\DeclareMathOperator{\Tw}{Tw}                         
\DeclareMathOperator{\Ch}{Ch}                         
\DeclareMathOperator{\sign}{sign}                     
\def\Th{\@ifnextchar({\qopname\relax o{Th}}{\qopname\relax o{Th}_}}
\newcommand*{\wackyenum}[1]{%
  \expandafter\@wackyenum\csname c@#1\endcsname%
}
\newcommand*{\@wackyenum}[1]{%
  $\ifcase#1\or1\or2\or3\or4\or5\or6\or7\or8^+\or8^-\or9\or10\or11^+\or11^-\or12^-\or12^+\or13^-\or13^+\or14%
    \else\@ctrerr\fi$%
}
\AddEnumerateCounter{\wackyenum}{\@wackyenum}{53.13}
\newlist{invlist}{enumerate}{2}
\setlist[invlist,1]{label=\textit{Invariant \wackyenum*:},ref=\wackyenum*,itemindent=10ex,labelwidth=13.5ex, align=left} 
\crefname{invlisti}{invariant}{invariants}
\crefname{stepno}{step}{steps}
\def\verif#1{\paragraph{Verification of \cref{#1}:}}
\newcounter{stepno}
\newif\ifsteppoint
\def\step#1#2#3{%
  \parskip=0pt plus 1pt\parindent0pt\hangafter1%
  \everypar={\parindent0pt\hangindent20pt\hangafter1}%
  \vskip 1.6ex plus 0.24ex minus 0.15ex%
  \refstepcounter{stepno}\label{#3}\textsf{Step \thestepno\ (#1):}\par\penalty40%
  \textsc{Execution condition: }#2\ifsteppoint.\else\steppointtrue\fi\par\penalty40%
  \textsc{Step description:}%
  \parskip=0.7ex plus 0.05ex minus 0.08ex\everypar={\hangafter0\hangindent20pt\parindent0pt}}
\def\stepalt#1#2#3{%
  \parskip=0pt plus 1pt\parindent0pt\hangafter1%
  \everypar={\parindent20pt\hangindent20pt\advance\hangindent3.4ex\hangafter1}%
  \vskip 1.6ex plus 0.24ex minus 0.15ex%
  \refstepcounter{stepno}\textsf{Step \thestepno\ (#1):}\par\penalty40%
  \noindent\textsc{Execution condition: } One of the following two conditions holds:\par%
  \makebox[3.4ex][r]{$+$) }#2.\par%
  \makebox[3.4ex][r]{$-$) }#3.\par\penalty40%
\everypar={\parindent0pt\hangindent20pt\hangafter1}%
  \noindent\textsc{Step description:}%
  \parskip=0.7ex plus 0.05ex minus 0.08ex\everypar={\hangafter0\hangindent20pt\parindent0pt}}
\def\refconst#1{%
inequality (\hyperref[eq:const_inequalities]{4#1})}
\def\mathrlap{\mathpalette\mathrlapinternal}
\def\mathrlapinternal#1#2{%
\rlap{$\mathsurround=0pt#1{#2}$}}
\begin{document}
\maketitle

\begin{abstract}
In 1981 Beck and Fiala proved an upper bound for the discrepancy of 
a set system of degree~$d$ that is independent of the size of the 
ground set. In the intervening years the bound has been decreased
from $2d-2$ to $2d-4$. We improve the bound to $2d-\log^* d$.
\\[1ex]\textsc{MSC classes:} 05D05, 11K38, 05C15
\end{abstract}

\section{Introduction}
Let $X$ be a finite set, and let $\F$ be a family of subsets of $X$. A \emph{two-coloring} of $X$ is 
a function $\chi\colon X\to \{-1,+1\}$. For $S\subset X$ we define $\chi(S)\eqdef \sum_{x\in S} \chi(x)$.
The \emph{discrepancy} of a coloring $\chi$ is
\[
  \disc \chi\eqdef\max_{S\in\F} \,\abs{\chi(S)}.
\]
The discrepancy of $\F$ is then defined as the discrepancy of an optimal coloring,
\[
  \disc \F\eqdef\min_{\chi} \disc \chi.
\]

The \emph{degree} of $x\in X$ in the family $\F$ is the number of sets that contain $x$.
Over 30 years ago Beck and Fiala \cite{beck_fiala} proved that
if the maximum degree of vertices in $\F$ is $d$, then $\disc \F\leq 2d-2$.
The remarkable feature of the result is that it depends neither on the number of sets in $\F$
nor on the size of $X$. If dependence on these quantities is permitted, the best result
is due to Banaszczyk \cite[Theorem~2]{banaszczyk}, and asserts that $\disc \F\leq c\sqrt {d \log \abs{X}}$.
Let $f(d)\eqdef \max \disc \F$, where the maximum is taken over all set families of degree at most $d$.
In \cite{beck_fiala} Beck and Fiala conjectured that $f(d)=O(\sqrt{d})$. If true,
the conjecture would be a strengthening of Spencer's six deviation theorem \cite{spencer_sixdev}.
A related, but stronger conjecture was made by J\'anos Koml\'os \cite[p.~680]{spencer_sixdev}.
A relaxation of Koml\'os's conjecture to vector-valued colorings has been established by Nikolov \cite{nikolov_relax}.
For a general overview of discrepancy theory see books \cite{matousek_book,chazelle_book}.

The original Beck--Fiala bound has been improved twice. First, Bednarchak and Helm \cite{bednarchak_helm} proved that
$f(d)\leq 2d-3$ for $d\geq 3$. Then Helm \cite{helm} claimed\footnote{The author has been unable to understand Helm's proof, 
or to reach Martin Helm for clarification.} that $f(d)\leq 2d-4$ for all sufficiently large $d$.
In this paper, we improve the bound by a function growing to infinity with $d$:
\begin{maintheorem}
For all sufficiently large $d$ we have
\[
  f(d)\leq 2d-\log^* d.
\]
\end{maintheorem}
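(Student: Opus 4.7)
The plan is to analyze a modified version of the Beck--Fiala floating-coloring algorithm, strengthening the bookkeeping so that an additional unit of slack can be extracted whenever the process passes through a structurally restricted phase; iterating the gain then yields the $\log^* d$ improvement.

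I would begin by recalling the base algorithm. Initialize $\chi(x)=0$ for all $x\in X$, and at each time $t$ move $\chi$ within the null space of the constraint system $\{\chi(S)=\chi_t(S): |S\cap A_t|>d\}$, where $A_t\subseteq X$ is the set of coordinates currently in $(-1,1)$. The motion continues until either a new coordinate freezes at $\pm 1$ or a new set becomes \emph{safe} (i.e.\ $|S\cap A_t|\le d$). Because the number of unsafe constraints is strictly smaller than $|A_t|$, a nonzero motion direction always exists. The classical $2d-2$ bound follows by charging the final value $|\chi(S)|$ to the moment $S$ first becomes safe, after which each of its at most $d$ active coordinates contributes less than $2$ in absolute value.

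To improve this by $\log^* d$, I would prove by induction on $t\ge 0$ that $f(d)\le 2d-t$ for every $d\ge d_t$, where $d_0$ is a small absolute constant and the thresholds satisfy $d_{t+1}\le 2^{d_t}$. The base cases $t=1,2,3,4$ are covered by the classical result and the Bednarchak--Helm argument. For the inductive step, suppose $f(d')\le 2d'-t$ has been established for every $d'\ge d_t$, and consider a family $\F$ of maximum degree $d\ge d_{t+1}$. Run the floating algorithm, but at each time step choose the motion direction not arbitrarily in the kernel, but as the minimizer of a potential that counts near-saturated constraints weighted by incidences with near-frozen coordinates. The structural claim — the heart of the argument — is that the execution must, at some time, reach a state in which the subsystem formed by the active coordinates and the still-unsafe sets has maximum degree at most $\log d$. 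On this subsystem the inductive hypothesis applies (since $\log d \ge d_t$ by choice of $d_{t+1}$) and yields a coloring of the active coordinates whose discrepancy on those remaining sets is at most $2\log d - t$. Splicing this coloring into the ambient execution saves an additional unit on every set, giving $f(d)\le 2d-(t+1)$. The recursion $d_{t+1}\le 2^{d_t}$ inverts to $t\le \log^* d$, which is the desired bound once $d$ exceeds the final threshold.

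The principal obstacle will be the structural lemma producing the $\log d$-degree subsystem. Two issues must be overcome. First, the potential-based choice of motion direction must be shown to actually force the claimed sparsification; this is a linear-algebraic balancing argument and will probably require tracking a joint invariant coupling active-coordinate fractional values with near-safe set incidences, so that no single set ever accumulates more than a logarithmic share of the slack produced by freezings. Second, the partial coloring produced by the inductive hypothesis must be compatible with the frozen coordinates already committed by the outer algorithm; I expect to handle this by pausing the outer algorithm at the right moment, solving the sub-instance exactly, and then resuming, which is admissible because sub-instance discrepancy is measured against the \emph{current} value of $\chi(S)$ rather than against zero. A careful accounting then shows that the savings from different recursion levels add rather than cancel, completing the induction.
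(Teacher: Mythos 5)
There is a genuine gap at the heart of the proposal, and it is a large one. Your entire induction rests on the structural claim that, with a suitably chosen potential to guide the perturbation directions, the floating-coloring process must at some stage reach a state in which the residual system (active coordinates, still-unsafe sets) has maximum degree at most $\log d$. No argument is given for this, and no mechanism in the floating-coloring process forces the degree of the residual system to drop: the degree of an active coordinate $x$ is the number of unsafe sets containing $x$, and an adversary can easily arrange for a coordinate to remain in close to $d$ unsafe sets until the very end (e.g.\ sets sharing a small common part and otherwise spread over nearly-disjoint, slowly-freezing blocks). Indeed, if one \emph{could} show that the active degree reliably collapses to $\log d$, one could iterate to degree $O(1)$ and get discrepancy $O(\sqrt{d})$, resolving the Beck--Fiala conjecture itself --- which is strong evidence that the claim as stated is not attainable by a local potential argument. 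The paper does not prove, use, or need anything of this sort.

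Even setting that aside, the splicing step is underspecified in a way that matters. Applying the inductive hypothesis to the residual subsystem controls the discrepancy only of the sets that are \emph{still unsafe} at the pause time; it does nothing for the sets that already became safe and were released from the constraint system, and those are exactly the sets whose discrepancy is in danger of reaching $2d-O(1)$. Your proposal has no device to re-capture them. The paper's actual mechanism is entirely different: it shows that whenever the process stalls, the ``nasty'' sets (size at most $d$ but with threat near $2d$) must share a nearly-frozen common element; it groups $W$ of them into a \emph{cohort} with that element as a banner, runs a pairing tournament inside the cohort so that losers become benign and winners accumulate virtual banner mass, and ties the banner's eventual freezing direction to a large clamping coefficient $\beta_r$ so that a cohort can only be disbanded favorably. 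The $\log^* d$ emerges because the error built up inside a cohort is governed by the recursion $R_{D+1}\approx 2R_D - 2^D$, which is exponential before it turns negative, and a set may pass through up to $\Delta$ cohorts; this forces the tower $\Tw_r$ and hence the $\log^*$. Your plan, by contrast, asks for a degree-reduction lemma that plays no role in the paper and for which you offer no proof, so the core of the argument is missing.
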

Here $\log^*x \eqdef \min \{ t : \log^{(t)} x\leq 1\}$, where $\log^{(1)} x=\log x$ and $\log^{(t+1)}x=\log^{(t)}\log x$,
and the logarithms are to the base $2$.

\section{Proof ideas}
Like the proofs in \cite{beck_fiala,bednarchak_helm,helm}, our proof uses the method of floating colors. 
In this section we present the method informally, and explain the main difficulty in its application.
We present the three main ideas of our proof, and how these ideas address the difficulty.
In following sections we give the proof in full.

A \emph{floating coloring} is a function $\chi\colon X\to [-1,+1]$. The value $\chi(x)$ is the ``color''
of $x$ according to $\chi$. If $-1<\chi(x)<+1$, we consider the 
color of $x$ ``floating'', whereas $\chi(x)\in \{-1,+1\}$ means that the color of $x$ is ``frozen''.
Once frozen, the elements never change their color again. All the floating elements eventually
turn frozen, giving a genuine two-coloring of $X$. Our goal is to ensure that the discrepancy of that coloring 
does not exceed $2d-\Delta$, where $2d-\Delta$ is the desired bound on $f(d)$.

The purpose of evolving coloring $\chi$ gradually is to focus on the dangerous sets.
A set $S\in \F$ is \emph{dangerous} if there is a way to freeze the floating elements to make the
discrepancy exceed $2d-\Delta$. Only dangerous sets matter in the subsequent evolution of $\chi$. In
the argument of Beck and Fiala, an invariant is maintained: for each dangerous set $S\in\F$
we have $\chi(S)=0$. \emph{Size of a dangerous set} is the number of floating elements in the set.
It is easily shown that if the average size of dangerous sets exceeds $d$,
then the number of floating colors exceeds the number of dangerous sets; hence, there 
is a way to perturb the floating colors, in a manner that preserves the invariant. 
The coloring is perturbed until one of the floating elements becomes frozen, and the process 
repeats.

The dangerous sets of size at most $d$ thus pose a natural problem. If not for them,
the process would never stop, and the result is a coloring of discrepancy at most $2d-\Delta$.
Let us call these particularly troublesome dangerous sets \emph{nasty}. 
The sum of the floating elements of a nasty set is nearly $\pm d$, for otherwise
the floating elements do not have enough ``room'' to change much. Hence,
most elements of a nasty set are close to $\pm 1$.
The \emph{first idea} thus is to forcibly round elements $x$ such that $\abs{\chi(x)}>1-\alpha$,
where $\alpha$ is a small number. Forcible rounding of only $O(\Delta)$ elements in a nasty set is enough to render
the set benign. 

Forcible rounding introduces an ``error'' of at most $\alpha$ into the invariant $\chi(S)=0$.
Therefore, forcible rounding is tolerable only if the rounded element is not contained in any very large set.
The \emph{second idea} consists in noting that if there are nasty sets, but no elements in them
can be forcibly rounded, then the nasty sets must be highly overlapping. Indeed,
it is possible to perturb $\chi$ if the average size of sets exceeds $d$,
and the large sets that prevent forcible rounding contribute a lot to this average.
Making this idea precise requires a charging argument, whose details are in \cref{sec:newcohort}.

We take advantage of the overlap in the nasty sets by selecting an element $b$
that is common to many nasty sets, and singling out the nasty sets containing
$b$ into a separate \emph{cohort}. We call $b$ the cohort's \emph{banner}.
Since perturbation of $\chi$ might get stuck more than once, we might create
several cohorts over the course of our algorithm. We shall treat each cohort as a 
fully autonomous set system.  Thus it will be subject to its own invariants,
and will impose its own linear conditions in the perturbation step.

Since cohorts consist of nasty sets, the average set size in a cohort
is less than $d$, and so for the perturbation step to be possible, a cohort
cannot impose as many linear conditions as there are sets in a cohort.
The \emph{third idea} is to use the few available linear conditions 
to render some sets in a cohort benign. The benign sets pose no threat,
but still contribute elements towards the average set size in a cohort because
they contain the banner. (At this point the term ``average set size'' becomes a misnomer
since in the formula $\frac{\text{number of elements}}{\text{number of sets}}$
the benign sets contribute to the numerator, but not to the denominator. However we will keep
on using the term.)

To understand how linear conditions in a cohort work, we imagine that
sets in a cohort engage in an elimination tournament. When two nasty sets
$S$ and $S'$ are matched against one another instead of two invariants
$\chi(S)=C$ and $\chi(S')=C'$, there is just one 
invariant $\chi(S)+\chi(S')=C+C'$. The match is declared finished when the total
size of $S$ and $S'$ drops below $d$. The loser is the smaller of $S$ and $S'$, for it
can be shown that it became benign. The winner, on the other hand, might have
even larger error in $\chi(S)$ after the end of the match. 
Fortunately for us, the winner also gets a virtual trophy --- the banner of the loser.
This means that for purposes of computing the average set size
we can count the winner's banner element twice. In general,
a winner of $D$ matches will have $2^D$ virtual banner elements.

The final obstacle is the possibility that the banner of a cohort might get frozen. In that
case the defeated sets cease to contribute to the average set size, and 
the argument collapses. The rescue comes from the fact that when a
cohort was formed the banner was an element satisfying $\abs{\chi(b)}>1-\alpha$.
So, by replacing the invariant $\chi(S)=C$ by $\chi(S)+\beta \chi(b)=\hat{C}$,
where $\beta$ is very large, we can ensure that $b$ can be frozen only 
to the value that we originally attempted to round it to.  So, the banner can get frozen 
only in the favourable direction. If that happens, the sets in the cohort
become a bit less dangerous, but not yet benign; 
so we dissolve the cohort, and return the sets to the general pool, where
they can become parts of new cohorts. After being in a cohort $\Delta$
times, a set is guaranteed to be benign, and so the error in $\chi$ cannot
explode.\medskip

Despite its length, the sketch above misses a couple of crucial technical moments
that can be seen only from details. Most crucially, it does not explain why
the final bound, $\log^* d$, is pitifully tiny. It is
because the winners not only accumulate virtual banners, 
but also lose some of their own elements to freezing. 

It turns out that the banner accumulation happens faster, but the reason is subtle: Let $R_D$ be the maximal possible error in $\chi$
in a set that defeated $D$ sets. Crudely, we may think of $R_D$ as the number of
elements that a set lost due to freezing.
One can show that it satisfies an approximate recurrence $R_{D+1}\approx 2R_D-2^D$, where the 
term $2^D$ reflects $2^D$ virtual banners of a loser. Though $R_D$ eventually becomes negative, 
it grows exponentially at first. In particular, a cohort might get disbanded when $\chi$ is
exponential in $R_0$. For any single set, this might happen up to $\Delta$ times, and so the error might grow to be a 
tower of height~$\Delta$.

\paragraph{Acknowledgments.} The author owes the development some of the ideas in this paper to discussions
with Po-Shen Loh, to whom he is very grateful. The author is also very thankful to the referee whose careful reading of the paper helped to eliminate some hard-to-catch mistakes. 
All the remaining mistakes remain responsibility of the author.

\section{Two-coloring algorithm}
\paragraph{Data.}
The only data that the Beck--Fiala algorithm remembers is the floating coloring $\chi\colon X\to [-1,+1]$. 
The data that our algorithm uses is more elaborate:

\begin{center}
\begin{tabularx}{410pt}{XX}\toprule
\textbf{Data}                                      &  \textbf{Informal meaning}\\\hline
Function $\chi\colon X\to [-1,1]$                  &  Floating coloring\\[0.5ex]
\multirow{3}{180pt}{Partition of the family $\F$ as\\
  $\F=\B\cup\G\cup\C_1\cup\dotsb\cup \C_m$}         &  $\B$ contains benign sets\\
                                                   &  $\G$ is a general pool\\
                                                   &  $\C_1,\dotsc,\C_m$ are cohorts\\[0.5ex]
Elements $b_1,\dotsc,b_m\in X$                     & Banners for the cohorts\\
Signs $\eps_1,\dotsc,\eps_m\in\{-1,+1\}$           & The color of banner $b_i$ is close to $\eps_i$\\
Integers $r_1,\dotsc,r_m\in\{0,\dotsc,\Delta-1\}$  & The larger $r_i$ is, the less dangerous $\C_i$ is\\
Set families $M_1,\dotsc,M_m$                      & $M_i$ is a current matching in a cohort $\C_i$\\
Integer $D[S]\in \{0,1,\dotsc\}$ for each $S\in \C_i$   & Set $S$ has defeated $D[S]$ sets in its cohort\\
\bottomrule
\end{tabularx}
\end{center}

\paragraph{Notation.} To describe the algorithm, and the invariants that data satisfies, we need notation which we introduce now.
An element $x\in X$ such that $\chi(x)\in \{-1,+1\}$ is called \emph{frozen}; otherwise $x$ is \emph{floating}.
With this in mind, here is our notation:
\begin{align*}
  \Sz(S)&\eqdef \sum_{\makebox[7ex]{$\scriptstyle x\in S\text{ floating}$}} 1&&\quad\text{number of floating elements in $S$},\\
  \chi(S)&\eqdef \sum_{\makebox[7ex]{$\scriptstyle x \in S$}}\chi(x) &&\quad\text{current color (discrepancy) of $S$},\\
  \Fr(S)&\eqdef \sum_{\makebox[7ex]{$\scriptstyle x\in S\text{ frozen}$}}  \chi(x)&&\quad\text{total color of frozen elements in $S$},\\
  \Fl(S)&\eqdef \sum_{\makebox[7ex]{$\scriptstyle x\in S\text{ floating}$}} \chi(x)&&\quad\text{total color of floating elements in $S$},\\
  \Th-(S)&\eqdef\Sz(S)-\Fr(S)&&\quad\text{threat of negative discrepancy},\\
  \Th+(S)&\eqdef\Sz(S)+\Fr(S)&&\quad\text{threat of positive discrepancy},\\
  \Th(S)&\eqdef \max\bigl(\Th-(S),\Th+(S)\bigr)&&\quad\text{threat of discrepancy}.
\end{align*}

For later use we record two identities,
\begin{align} 
\Th-(S) &= \Sz(S)+\Fl(S)-\chi(S),\label{eq:Thn}\\
\Th+(S) &= \Sz(S)-\Fl(S)+\chi(S).\label{eq:Thp}
\end{align}

The algorithm proceeds in stages. To refer to the data at a particular stage we use
superscripts. For example, $\Sz^{(n)}(S)$ and $D^{(n)}[S]$ denote the values
of $\Sz(S)$ and $D[S]$ respectively at the $n$'th stage of the algorithm. The notation
for other quantities follows the same pattern.

\paragraph{Constants.} In the proof we use several constants, which we introduce now. 
Their informal meanings appear to the right of their definitions.
\begin{align}
\Delta&\eqdef \log^* d                      &&\quad\text{A set $S$ is benign if $\Th(S)\leq 2d-\Delta$},\notag\\
\alpha&\eqdef \tfrac{1}{4}                  &&\quad\text{The threshold for rounding is }1-\alpha,\notag\\
\Tw_r&\eqdef 
  \begin{cases}
    \Delta                     & \text{if }r\in \{0,1\}\\
    2^{8\Tw_{r-2}}             & \text{otherwise}
  \end{cases}                               &&\quad\text{Tower function controlling blowup of }\operatorname{Th},\notag\\ 
\beta_r&\eqdef 4 \Tw_r                      &&\quad\text{Clamping factor for cohorts with }r_i=r,\notag\\
W      &\eqdef d/(64\Delta^2 \Tw_{\Delta-1})  &&\quad\text{Size of the newly-created cohorts},\notag\\
R_D    &\eqdef (D-2)2^D-(2^D-1)\Delta+2     &&\quad\text{This term controls $\chi(S)$ in a cohort}.\label{eq:Rddef}
\end{align}
Of these constants, the most important is $R_D$. The whole proof is based on the
fact that $R_D$ is eventually larger than $C 2^D$ for any constant $C$. 

In what follows we assume that $d$ is so large that $\Delta>10$ holds.

The choice of constants is fairly flexible, and the chosen constants are far from unique or optimal.
For example, any choice for $\alpha$ from $\Tw_{\Delta}/d$ to a constant less than $1$ would have worked (with minor changes
in the other constants). 
To avoid burdening the main exposition, we record the 
needed inequalities between constants here (valid for all $r<\Delta$):
\begin{equation}\label{eq:const_inequalities}
\begin{aligned}
\text{a) }&\Delta\leq \Tw_r&\qquad \text{b) }&\Tw_r+\Delta<\beta_r(1-\alpha)\\
\text{c) }&(\beta_r+1)\alpha+\Tw_r+\Delta\leq 4\Tw_r&\qquad\text{d) }&16\Tw_r\leq 2^{16\Tw_{r-2}}&\\
\text{e) }&\Tw_{\Delta-1}\leq \log \log d&\qquad\text{f) }&2\Tw_{\Delta-1}+(2-\alpha)\Delta\leq \alpha d\\
\text{g) }&d\geq W \Delta \bigl(\Delta+2\Tw_{\Delta-1}+2\bigr)(8+4\Delta)&\qquad\text{h) }&2^{8\Tw_r+6}\Tw_r<W\\[0.2ex]
\text{i) }
\mathrlap{(1-\tfrac{1}{2}\alpha)(2d-\Delta)\geq (1-\tfrac{1}{2}\alpha)d+2^{4\Tw_r+1}\beta_r\alpha+2^{4\Tw_r+1}\Tw_r+2^{4\Tw_r}\Delta}\\
\text{j) }&\mathrlap{\Tw_{r+2}\geq 2^{4\Tw_r+1}\beta_r\alpha+2^{4\Tw_r+1}\Tw_r+2^{4\Tw_r}\Delta}\\
\end{aligned}
\end{equation}
\textit{Proofs:}  
\textit{(a)} is proved by induction on $r$;\quad 
\textit{(b)} and \textit{(c)} are implied by (a);\quad 
\textit{(d)} follows from $1\leq \Delta$ and (a);\quad 
\textit{(e)} is true because $\log^*(16\Tw_r)\leq r/2+\log^*(16\Delta)$ by 
  induction on $r$, and because
  $\Delta/2+\log^*(16\Delta)\leq \Delta-1$ for $\Delta> 10$;\quad 
\textit{(f)} follows from (e);\quad 
\textit{(g)} follows from (e) and the definition of W;\quad
\textit{(h)} is implied by (a) and (e);\quad 
\textit{(i)} follows from (a) and three uses of (e) to bound each of the summands on the right;\quad
\textit{(j)} is a consequence of (a) and the definition of $\Tw_r$.\qed

\paragraph{Invariants.} The data satisfies the following fourteen invariants.
Note that the invariants are symmetric with respect to flipping the coloring,
i.e., with respect to the inversion $\chi\mapsto -\chi$, $\eps_i\mapsto -\eps_i$. 
Our algorithm is also symmetric in this sense. We suggest that the reader 
examine \cref{i:frozen,i:benign,i:dangerbound,i:banner,i:matching,i:eqmatching,i:smallsize},
and refer to other invariants later.

\begin{invlist}
\item \label{i:frozen}
   If $x\in X$ is frozen in coloring $\chi^{(i)}$, then it is also frozen in $\chi^{(j)}$ for all $j>i$.
\item \label{i:benign}
   Every set $S\in \B$ satisfies $\Th(S)\leq 2d-\Delta$.
\item \label{i:dangerbound}
   Every set $S\in \F$ satisfying $\Sz(S)\leq d$ also satisfies $\Th(S)\leq 2d$.
\item \label{i:banner}
   Element $b_i$ is common to all the sets in $\C_i$.
\item \label{i:matching}
   Family $M_i$ is a matching on $\C_i$, i.e., $M_i$ consists of pairs
   of sets from $\C_i$, and the pairs in $M_i$ are disjoint. 
   Note that the matching is not perfect: some sets in $\C_i$ might be unmatched.
\item \label{i:eqmatching}
   If $\{S,S'\}\in M_i$, then $D[S]=D[S']$.
\item \label{i:smallsize}
   If $S\in\C_i$, then $\Sz(S)\leq d+1-2^{D[S]}$.
\item \label{i:otherthreatp}
   If $S\in\C_i$, and $\eps_i=+1$ then 
      $\Th+(S)\leq \Delta+2-2^{D[S]+1}$.
\item \label{i:otherthreatn}
   If $S\in\C_i$, and $\eps_i=-1$ then 
      $\Th-(S)\leq \Delta+2-2^{D[S]+1}$.
\item \label{i:bannercnt}
   If element $b\in X$ is a banner of $k$ cohorts $\C_{i_1},\dotsc,\C_{i_k}$, then
      $b$ is contained in at least $\sum_{j=1}^k \bigl(W-\abs{\C_{i_j}}\bigr)$
      sets of $\B$.
\item \label{i:elim} 
     For each cohort $\C_i$ we have $\sum_{S\in \C_i} 2^{D[S]}\leq W$.
     \addtolength{\labelwidth}{1.4ex}
     \addtolength{\itemindent}{1.4ex}
\item \label{i:Gchip}
     If $S\in \G$ and $\Th+(S)=2d-r$, then\\[1ex]
     $\displaystyle\begin{aligned}
\phantom{-}\chi(S)&\leq 0&&\quad\text{if }r<\Delta\text{ and }\Sz(S)\geq d,\\
\phantom{-}\chi(S)&\leq \Tw_r-\tfrac{1}{2}\alpha \Th-(S)&&\quad\text{if }r<\Delta\text{ and }\Sz(S)\leq d.\\
     \end{aligned}$
\item \label{i:Gchin}
     If $S\in \G$ and $\Th-(S)=2d-r$, then\\[1ex]
     $\displaystyle\begin{aligned}
      -\chi(S)&\leq 0&&\quad\text{if }r<\Delta\text{ and }\Sz(S)\geq d,\\
      -\chi(S)&\leq \Tw_r-\tfrac{1}{2}\alpha \Th+(S)&&\quad\text{if }r<\Delta\text{ and }\Sz(S)\leq d.\\
     \end{aligned}$
\item \label{i:Cchifreen}
     If $\eps_i=-1$, and a set $S\in \C_i$ is not in any edge of $M_i$, and $D=D[S]$ then\\[1ex]
     $\displaystyle
     \phantom{-}\chi(S)+2^D \beta_{r_i}\bigl(\chi(b_i)+1-\alpha\bigr) 
         \leq 2^D \Tw_{r_i} - \tfrac{1}{2}\alpha \Th-(S) - R_D.
     $
\item \label{i:Cchifreep}
     If $\eps_i=+1$, and a set $S\in \C_i$ is not in any edge of $M_i$, and $D=D[S]$ then\\[1ex]
     $\displaystyle
     -\chi(S)-2^D \beta_{r_i}\bigl(\chi(b_i)-1+\alpha\bigr) 
         \leq 2^D \Tw_{r_i} - \tfrac{1}{2}\alpha \Th+(S) - R_D.
     $
\item \label{i:Cchimatchedn}
     If $\eps_i=-1$, and $\{S,S'\}\in M_i$, and $D=D[S]=D[S']$ then\\[1ex]
     $\displaystyle
     \phantom{-}\chi(S)+\chi(S')+2^{D+1} \beta_{r_i}\bigl(\chi(b_i)+1-\alpha\bigr) 
         \leq 2^{D+1} \Tw_{r_i} - \tfrac{1}{2}\alpha\bigl(\Th-(S)+\Th-(S')\bigr) - 2 R_D.
     $
\item \label{i:Cchimatchedp}
     If $\eps_i=+1$, and $\{S,S'\}\in M_i$, and $D=D[S]=D[S']$ then\\[1ex]
     $\displaystyle
     -\chi(S)-\chi(S')-2^{D+1} \beta_{r_i}\bigl(\chi(b_i)-1+\alpha\bigr) 
         \leq 2^{D+1} \Tw_{r_i} - \tfrac{1}{2}\alpha\bigl(\Th+(S)+\Th+(S')\bigr) - 2 R_D.
     $
\item \label{i:Dbound}
     If $S\in \C_i$, then $D[S]\leq 4\Tw_{r_i}$.
\end{invlist}

\paragraph{Initialization of the algorithm.} At start,
we set $\chi=0$, $\B=\emptyset$, $\G=\F$, $\C=\emptyset$ and $m=0$.

\paragraph{Steps of the algorithm.} The algorithm of Beck--Fiala
makes progress in two ways: by using linear perturbation of the current
floating coloring, and by discarding the benign sets. Our algorithm
uses nine ways to make progress. We refer to these ways as \emph{steps}.
For each step, there is a condition that must hold for it to be executed.
The steps are ordered, and the choice of a step to be executed is greedy: 
we always execute the first step whose condition is satisfied.
For example, step 5 is executed only if steps 1 through 4 cannot be executed.

{%
\step{Moving benign sets from $\G$ to $\B$}{There is an $S\in \G$ such that $\Th(S)\leq 2d-\Delta$}{s:genbenign}
  Move $S$ from $\G$ to $\B$. 

\step{Removing empty cohorts}{There is a cohort $\C_i$ such that $\C_i=\emptyset$}{s:emptycohort}
  Remove cohort $\C_i$ by deleting $\C_i$, $b_i$, $\eps_i$, $r_i$ and $M_i$
  and renumbering the remaining cohorts appropriately.

\stepalt{Rounding elements that are safe to round}%
{There is an $x\in X$ such that $\chi(x)>1-\alpha$ and there exists no $S\in\G$ containing $x$ that satisfies $\Sz(S)\geq d+1$ and $\Th+(S)>2d-\Delta$}%
{There is an $x\in X$ such that $-\chi(x)>1-\alpha$ and there exists no $S\in\G$ containing $x$ that satisfies $\Sz(S)\geq d+1$ and $\Th-(S)>2d-\Delta$}
\label{s:round}
  In case ($+$), set $\chi(x)$ equal to $+1$.
  In case ($-$), set $\chi(x)$ equal to $-1$.

\step{Moving benign cohort sets to $\B$}{There is an $S\in \C_i$ that is not a part of $M_i$
and such that $\Th(S)\leq 2d-\Delta$}{s:cohbenign}
  Move $S$ to $\B$.

\step{Disbanding cohorts whose banners were frozen}{For some $i$ we have $\chi(b_i)\in \{-1,+1\}$}{s:frozencohort}
  Disband cohort $\C_i$ by moving all sets in $\C_i$ to $\G$, and then 
  removing cohort as in \cref{s:emptycohort}.

\step{Declaring some matches finished}{There is an edge $\{S,S'\}\in M_i$ with $D=D[S]=D[S']$ such that
\begin{equation}\label{eq:finishcond}
\Sz(S)+\Sz(S')+2^{D+1}-2\leq d.
\end{equation}\steppointfalse}{s:exhaust}
  Without loss of generality, $\Sz(S)\geq \Sz(S')$. Perform the next three actions:\\*
  1) Remove edge $\{S,S'\}$ from $M_i$.\\* 
  2) Move $S'$ to $\B$.\\*
  3) If $\Th(S)\leq 2d-\Delta$, move $S$ to $\B$ as well. Otherwise, increment $D[S]$.

\step{Matching unmatched sets in a cohort}{There are sets $S,S'\in \C_i$ 
that are not part of $M_i$ and such that $D[S]=D[S']$}{s:match}
  Add edge $\{S,S'\}$ to $M_i$.

\step{Linear perturbation}{Let $N=\abs{\G}+\sum_{i=1}^m \bigl(\abs{\C_i}-\abs{M_i}\bigr)$.
Execute this step only if the number of floating elements exceeds $N$}{s:perturb}
  We will generate a set $E$ of $N$ linear equations in values of 
  an unknown function $\tau\colon X\to \R$.
  The equations will have the property that the 
  current floating coloring $\chi$ satisfies them all.

  - Each $S\in \G$ generates the equation $\tau(S)=\chi(S)$.\\*
  - Each $S\in \C_i$ that is not in $M_i$ generates the equation
  \[
    \tau(S)+2^D \beta_{r_i}\tau(b_i)=\chi(S)+2^D \beta_{r_i}\chi(b_i).
  \]
  - Each edge $\{S,S'\}\in M_i$ generates the equation
  \[
    \tau(S)+\tau(S')+2^{D+1} \beta_{r_i}\tau(b_i)=\chi(S)+\chi(S')+2^{D+1} \beta_{r_i}\chi(b_i).
  \]

  Let $E$ be the resulting set of equations. Note that $\abs{E}=N$. Let
  \[
    A\eqdef \bigl\{ \tau : \tau\text{ satisfies all of }E,\text{ and }\tau(x)=\chi(x)\text{ whenever }\chi(x)\in\{-1,+1\}\,\bigr\}
  \]
  Since the number of floating elements exceeds $N$, set $A$ is an affine space of positive dimension.
  Since $A$ contains $\chi$, it also must contain a point $\tau\in [-1,+1]^X$ in which more elements
  are frozen than in the current value of $\chi$. Set $\chi$ to that $\tau$.

\step{Creating a new cohort}{Not all sets are in $\B$, i.e., $\F\neq \B$}{s:newcohort}
  In \cref{sec:newcohort} we will show that this step is executed only if
  there is a $b\in X$, and a family $\Cb\subset \G$ of size $\abs{\Cb}=W$ 
  and a number $r<\Delta$ such that one of the following holds.\\*
\makebox[3.4ex][r]{$+$) }We have $\chi(b)>1-\alpha$, and each $S\in\Cb$ satisfies $b\in S$, $\Th-(S)=2d-r$ and $\Sz(S)\leq d$.\\*
\makebox[3.4ex][r]{$-$) }We have $-\chi(b)>1-\alpha$, and each $S\in\Cb$ satisfies $b\in S$, $\Th+(S)=2d-r$ and $\Sz(S)\leq d$.

Create a new empty cohort $\C_{m+1}$, move all sets in $\Cb$ from $\G$
to $\C_{m+1}$, and set $b_{m+1}=b$, $\eps_{m+1}=\sign \chi(b)$, $r_{m+1}=r$, $M_{m+1}=\emptyset$
and $D[S]=0$ for all $S\in \C_{m+1}$.

}

\section{Proof of the algorithm's correctness}
In this section we show that each step of the algorithm preserves all the invariants
enumerated in the previous section. We also show that the algorithm terminates, and
that its termination implies that the discrepancy of the set family $\F$ is at most 
$2d-\Delta$.

In the proofs that follow we use several consequences of the invariants that we stated.
We record these consequences now.

\begin{lemma}\label{lem:thmonotone}
If \cref{i:frozen} holds, then $\Th+^{(j)}(S)$ and $\Th-^{(j)}(S)$ are non-increasing functions of $j$.
\end{lemma}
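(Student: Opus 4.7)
The plan is to compare $\Th^{\pm}$ at two stages $i<j$ directly, using \cref{i:frozen} to pin down exactly which elements change status. Specifically, for any set $S$, let $F$ be the set of elements of $S$ that are floating at stage $i$ but frozen at stage $j$. By \cref{i:frozen}, every element frozen at stage $i$ remains frozen at stage $j$ with the same $\chi$-value, so $F$ accounts for every change in the quantities $\Sz(S)$ and $\Fr(S)$ between the two stages.

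With this in hand I would compute the increments explicitly. Clearly $\Sz^{(j)}(S)-\Sz^{(i)}(S)=-\abs{F}$, since each element of $F$ leaves the floating set and no element joins it. On the other hand, elements frozen at both stages contribute identically to $\Fr$ by \cref{i:frozen}, so $\Fr^{(j)}(S)-\Fr^{(i)}(S)=\sum_{x\in F}\chi^{(j)}(x)$, where each $\chi^{(j)}(x)\in\{-1,+1\}$.

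Subtracting and adding these two increments gives
\[
\Th-^{(j)}(S)-\Th-^{(i)}(S)=-\sum_{x\in F}\bigl(1+\chi^{(j)}(x)\bigr),\qquad
\Th+^{(j)}(S)-\Th+^{(i)}(S)=-\sum_{x\in F}\bigl(1-\chi^{(j)}(x)\bigr),
\]
and since $1\pm\chi^{(j)}(x)\geq 0$ for $\chi^{(j)}(x)\in\{-1,+1\}$, both differences are $\leq 0$. There is no real obstacle here; the lemma is essentially a bookkeeping observation. The only thing one must be careful about is to invoke \cref{i:frozen} both for the elements that stay frozen (so that $\Fr$ only changes through $F$) and implicitly to rule out the reverse transition frozen$\to$floating, which otherwise could add positive contributions to $\Sz$.
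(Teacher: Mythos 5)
Your argument is correct and matches the paper's proof in substance: both track the elements that freeze between two stages, observe that each such element drops $\Sz$ by $1$ while altering $\Fr$ by its frozen $\pm 1$ value, and combine these to get monotonicity. The paper bounds $\Fr^{(k)}\le \Fr^{(j)}+r$ and substitutes for $\Th+$ (treating $\Th-$ ``similarly''), whereas you compute both increments exactly as $-\sum_{x\in F}(1\mp\chi^{(j)}(x))$; this is the same idea with the bookkeeping carried out slightly more explicitly.
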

\begin{proof}
Suppose $j<k$. Let $r=\Sz^{(j)}(S)-\Sz^{(k)}(S)$ be the number of elements
frozen between stages $j$ and $k$. Then  $\Fr^{(k)}\leq \Fr^{(j)}+r$. So 
$\Th+^{(k)}(S)=\Sz^{(k)}(S)+\Fr^{(k)}(S)\leq \bigl(\Sz^{(j)}(S)-r\bigr)+\bigl(\Fr^{(j)}(S)+r\bigr)$.
The proof for $\Th-$ is similar.
\end{proof}
\begin{lemma}\label{lem:genonesided_benign} 
If $S\in \F$ satisfying $\Sz(S)\leq d$ also satisfies $\Th+(S)>2d-\Delta$, then $\Th-(S)<\Delta$.
Conversely, if $\Th-(S)>2d-\Delta$, then $\Th+(S)<\Delta$.
\end{lemma}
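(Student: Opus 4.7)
The plan is to use the fact that $\Th+(S)$ and $\Th-(S)$ together satisfy an identity involving $\Sz(S)$, so a lower bound on one combined with the hypothesis $\Sz(S)\leq d$ immediately gives an upper bound on the other. Specifically, directly from the definitions,
\[
  \Th+(S)+\Th-(S)=\bigl(\Sz(S)+\Fr(S)\bigr)+\bigl(\Sz(S)-\Fr(S)\bigr)=2\Sz(S).
\]
Under the hypothesis $\Sz(S)\leq d$, this gives $\Th+(S)+\Th-(S)\leq 2d$.

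For the first statement, I assume $\Th+(S)>2d-\Delta$, then rearrange the previous inequality to obtain
\[
  \Th-(S)\leq 2d-\Th+(S)<2d-(2d-\Delta)=\Delta.
\]
The converse follows by the same computation with the roles of $\Th+$ and $\Th-$ swapped.

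No real obstacle exists here — the lemma is essentially a one-line algebraic consequence of the definitions of $\Th+$ and $\Th-$ together with the size hypothesis. The only thing to be careful about is writing out the additive identity $\Th+(S)+\Th-(S)=2\Sz(S)$ explicitly, since it is what makes the two ``sides'' of the threat interact.
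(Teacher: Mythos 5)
Your proof is correct and uses exactly the same identity $\Th+(S)+\Th-(S)=2\Sz(S)$ that the paper invokes; you've simply spelled out the one-line computation the paper leaves implicit.
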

\begin{proof}
This follows from $\Th+(S)+\Th-(S)=2\Sz(S)$.
\end{proof}
\begin{lemma}\label{lem:onesided_benign}
If \cref{i:frozen} holds, then
  $\Th+(S)\leq 2d-\Delta$ whenever $S\in \C_i$ and $\eps_i=+1$. 
Similarly, if \cref{i:frozen} holds, then 
  $\Th-(S)\leq 2d-\Delta$ whenever $S\in \C_i$ and $\eps_i=-1$. 
\end{lemma}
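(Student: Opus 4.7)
The plan is to trace each set currently in $\C_i$ back to the moment it entered the cohort. The key algorithmic observation is that \cref{s:newcohort} is the only step that ever places a set into a cohort, and that the same step fixes $\eps_i = \sign \chi(b_i)$ once and for all; no later step modifies $\eps_i$ or introduces new members into an existing cohort. Consequently, if $\eps_i = +1$ at the present stage, then $\C_i$ was created at some earlier stage $j$ via case~($+$) of \cref{s:newcohort}, and each of its initial members $S$ satisfied $\Th-^{(j)}(S) = 2d - r$ for some $r < \Delta$ together with $\Sz^{(j)}(S) \leq d$.

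With this setup, I would apply \cref{lem:genonesided_benign} at stage $j$ to obtain $\Th+^{(j)}(S) < \Delta$. Then \cref{lem:thmonotone}, whose hypothesis is exactly \cref{i:frozen}, yields $\Th+(S) \leq \Th+^{(j)}(S) < \Delta$. Since $\Delta = \log^* d$ and the standing assumption is $\Delta > 10$, we have $\Delta \leq d$, so $\Delta \leq 2d - \Delta$ and the desired bound $\Th+(S) \leq 2d - \Delta$ follows. The case $\eps_i = -1$ is handled by the paper's $\chi \mapsto -\chi$, $\eps_i \mapsto -\eps_i$ symmetry, which swaps case~($+$) with case~($-$) of \cref{s:newcohort} and exchanges $\Th+$ with $\Th-$.

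The only obstacle is the algorithmic claim about cohort membership, and verifying it is a routine audit of the nine steps. \cref{s:cohbenign,s:frozencohort,s:exhaust} only move sets out of a cohort (to $\B$ or to $\G$); \cref{s:emptycohort} removes an empty cohort; \cref{s:match} modifies only the matching $M_i$; and \cref{s:genbenign,s:round,s:perturb} leave the partition $\F = \B \cup \G \cup \C_1 \cup \dotsb \cup \C_m$ untouched. Hence no current member of $\C_i$ could have entered after the cohort's creation, and the history-based argument above is valid.
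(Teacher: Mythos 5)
Your proof is correct and follows the same route as the paper's: reduce by symmetry to $\eps_i=+1$, trace $S$ back to the stage it entered the cohort via \cref{s:newcohort}, apply \cref{lem:genonesided_benign} there to get $\Th+^{(j)}(S)<\Delta\leq 2d-\Delta$, and invoke \cref{lem:thmonotone} to carry the bound forward. One small slip in your audit: \cref{s:genbenign} does alter the partition (it moves a set from $\G$ to $\B$), though it still does not add sets to any cohort, so the conclusion stands.
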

\begin{proof} By symmetry, we may assume $\eps_i=+1$.
In view of \cref{lem:thmonotone}, it suffices to consider only
the stage when $S$ is added to a cohort at \cref{s:newcohort}.
At that stage, \cref{lem:genonesided_benign} applies. 
\end{proof}
\begin{lemma}\label{lem:other_chi_bound}
Suppose \cref{i:otherthreatp,i:otherthreatn} hold. If $S\in \C_i$, then 
\[
 \eps_i\cdot \chi(S)-\gamma\Th{\eps_i}(S)\leq \Delta+2-2^{D[S]+1}
\]
for all $0\leq\gamma\leq 1$.
\end{lemma}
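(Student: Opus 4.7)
The plan is to use the sign-flip symmetry $\chi \leftrightarrow -\chi$, $\eps_i \leftrightarrow -\eps_i$, under which \cref{i:otherthreatp} and \cref{i:otherthreatn} are interchanged (as are $\Th+$ and $\Th-$). This reduces the lemma to the case $\eps_i=+1$, where the assertion becomes
\[
\chi(S) - \gamma\,\Th+(S) \leq \Delta + 2 - 2^{D[S]+1}\qquad\text{for every }\gamma\in[0,1].
\]

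The argument rests on the pointwise upper bound $\chi(S) \leq \Th+(S)$. To see this, note that every $x\in X$ satisfies $\chi(x)\leq 1$, which upon summation over the floating $x\in S$ gives $\Fl(S)\leq \Sz(S)$; hence
\[
\chi(S) = \Fr(S)+\Fl(S) \leq \Fr(S)+\Sz(S) = \Th+(S).
\]
Using this together with $1-\gamma\geq 0$, I then chain
\[
\chi(S) - \gamma\,\Th+(S) \leq \Th+(S) - \gamma\,\Th+(S) = (1-\gamma)\,\Th+(S) \leq \Th+(S),
\]
and the hypothesis \cref{i:otherthreatp} bounds $\Th+(S)$ from above by $\Delta+2-2^{D[S]+1}$, which finishes the proof.

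I do not anticipate any substantive obstacle: each link of the chain is a one-line consequence, either of the elementary inequality $\chi(x)\leq 1$ or of the assumed invariant. The sole point that warrants a brief sanity check is the step $(1-\gamma)\,\Th+(S)\leq \Th+(S)$, which uses $\Th+(S)\geq 0$; should $\Th+(S)$ be negative, $(1-\gamma)\,\Th+(S)$ is at most $0$, and the claim reduces to $0\leq \Delta+2-2^{D[S]+1}$, which covers the regime of $D[S]$ in which the lemma will be invoked (otherwise one appeals to the tighter cohort estimates provided by \cref{i:Cchifreep,i:Cchimatchedp}).
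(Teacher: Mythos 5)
The two arguments coincide through the bound $\chi(S)-\gamma\Th+(S)\leq (1-\gamma)\Th+(S)$ (your direct use of $\chi(S)\leq\Th+(S)$ is the same content as the paper's rewrite $(1-\gamma)\chi(S)-\gamma(\Sz(S)-\Fl(S))$ plus $\chi\leq\Th+$, $\Fl\leq\Sz$), but they diverge at the next link. The paper applies \cref{i:otherthreatp} to $\Th+(S)$ while keeping the factor $1-\gamma$ outside, yielding $(1-\gamma)\Th+(S)\leq(1-\gamma)\bigl(\Delta+2-2^{D[S]+1}\bigr)$, a step that is valid unconditionally since $1-\gamma\geq 0$. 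You instead insert $(1-\gamma)\Th+(S)\leq\Th+(S)$, which requires $\Th+(S)\geq 0$, and only then invoke the invariant.

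That intermediate step is a genuine gap, and the fallback you offer does not repair it. You claim that when $\Th+(S)<0$ the task reduces to $0\leq \Delta+2-2^{D[S]+1}$ and that ``this covers the regime of $D[S]$ in which the lemma will be invoked.'' It does not: \cref{i:Dbound} permits $D[S]$ as large as $4\Tw_{r_i}$, and by \refconst{a} one has $\Tw_{r_i}\geq\Delta>10$, so $\Delta+2-2^{D[S]+1}$ is typically enormously negative. The lemma is applied with such $D$ (e.g.\ inside \cref{lem:bannersign}, in the verifications of \cref{i:benign,i:Gchip,i:Gchin,i:Cchifreen,i:Cchifreep}, and in \cref{lem:cohortconstr}), so the bad case is not fringe but central. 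Moreover, \cref{i:otherthreatp} makes $\Th+(S)\leq\Delta+2-2^{D[S]+1}<0$ the generic situation in those uses. The parenthetical appeal to the ``tighter cohort estimates'' of \cref{i:Cchifreep,i:Cchimatchedp} is not an argument: those invariants constrain a $\beta_{r_i}$-shifted combination of $\chi(S)$ and $\chi(b_i)$, not the sign of $\Th+(S)$, and nothing in your sketch shows how they would rescue the step $(1-\gamma)\Th+(S)\leq\Th+(S)$. The fix is simply to do what the paper does: never drop the $(1-\gamma)$ factor from $\Th+(S)$ before applying the invariant.
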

\begin{proof}
Assume $\eps_i=+1$ by symmetry. 
We have
\begin{align*}
  \chi(S)-\gamma\Th+(S)&=(1-\gamma)\chi(S)-\gamma\bigl(\Sz(S)-\Fl(S)\bigr)&&
     \text{using identity \eqref{eq:Thp}}\\
   &\leq (1-\gamma)\Th+(S)&&
      \text{due to }\chi\leq \Th+\text{ and }\Fl\leq\Sz\\
   &\leq (1-\gamma)(\Delta+2-2^{D[S]+1})&&
      \text{because of \cref{i:otherthreatp}}.\qedhere
\end{align*}
\end{proof}
\begin{lemma}\label{lem:bannersign}
If all the invariants hold and cohort $\C_i$ is non-empty, then
$\sign \chi(b_i)=\eps_i$.
\end{lemma}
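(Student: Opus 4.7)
The plan is to argue by contradiction: assuming $\sign \chi(b_i) \neq \eps_i$, one combines one of the cohort $\chi$-bounds \cref{i:Cchifreep,i:Cchifreen,i:Cchimatchedp,i:Cchimatchedn} with the matching one-sided threat bound \cref{i:otherthreatp,i:otherthreatn} to force a contradiction. By the built-in $\chi \mapsto -\chi$, $\eps_i \mapsto -\eps_i$ symmetry it suffices to treat $\eps_i = +1$ and derive a contradiction from $\chi(b_i) \leq 0$.

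Pick any $S \in \C_i$ (nonempty by hypothesis) and set $D = D[S]$. If $S$ is unmatched in $M_i$, apply \cref{i:Cchifreep}. Under $\chi(b_i) \leq 0$, the term $-2^D \beta_{r_i}(\chi(b_i) - 1 + \alpha)$ on the left is at least $2^D \beta_{r_i}(1-\alpha)$, and \refconst{b} lets one replace $\beta_{r_i}(1-\alpha)$ by a quantity strictly greater than $\Tw_{r_i} + \Delta$. After cancelling $2^D \Tw_{r_i}$, the invariant becomes
\[
\chi(S) > 2^D \Delta + \tfrac{1}{2}\alpha \Th+(S) + R_D.
\]
Since $\chi(S) \leq \Th+(S)$ (an immediate consequence of $\Fl(S) \leq \Sz(S)$), this upgrades to $(1 - \tfrac{1}{2}\alpha)\Th+(S) > 2^D \Delta + R_D$. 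Plugging in the upper bound $\Th+(S) \leq \Delta + 2 - 2^{D+1}$ from \cref{i:otherthreatp} and the explicit formula \eqref{eq:Rddef} for $R_D$ reduces matters to verifying
\[
\bigl(1-\tfrac{1}{2}\alpha\bigr)\bigl(\Delta + 2 - 2^{D+1}\bigr) \leq D\cdot 2^D - 2^{D+1} + \Delta + 2 \qquad\text{for all } D \geq 0,
\]
which with $\alpha = 1/4$ is elementary: for $D \in \{0,1\}$ the slack $\tfrac{1}{8}\Delta$ coming from the coefficient $7/8$ does the job, and for $D \geq 2$ the right-hand side already grows as $(D-2)2^D$ while the left side is bounded above by a linear function of $\Delta$.

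The matched case $\{S, S'\} \in M_i$ is structurally identical. By \cref{i:eqmatching} both sets share the same $D$, and the parallel manipulation of \cref{i:Cchimatchedp} yields $(1 - \tfrac{1}{2}\alpha)(\Th+(S) + \Th+(S')) > 2^{D+1}\Delta + 2R_D$. Summing two instances of \cref{i:otherthreatp} gives $\Th+(S)+\Th+(S') \leq 2\Delta + 4 - 2^{D+2}$, and the resulting numerical inequality again fails for all $D \geq 0$ by the same inspection. I expect the main obstacle to be purely bookkeeping: keeping the signs and threat orientations straight across the four closely-related invariants while exploiting the symmetry. The only genuinely substantive input is \refconst{b}, which is precisely the statement that the clamping factor $\beta_r$ was chosen large enough to dominate $\Tw_r$ with room to spare.
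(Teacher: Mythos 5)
Your proof is correct and takes essentially the same route as the paper's: both combine \cref{i:Cchifreep} (or \cref{i:Cchimatchedp}) with \cref{i:otherthreatp} and the clamping inequality \refconst{b}. The only differences are cosmetic --- the paper handles matched and unmatched sets uniformly by taking $S'=S$ when $S$ is unmatched, and delegates the $(1-\tfrac{1}{2}\alpha)\Th+$ manipulation to \cref{lem:other_chi_bound} with $\gamma=\tfrac{1}{2}\alpha$, whereas you frame it as a contradiction, inline that lemma's computation via $\chi(S)\leq\Th+(S)$, and verify the resulting numerical inequality in $D$ by direct inspection.
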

\begin{proof}
Consider the case $\eps_i=+1$, the other case is analogous.
If $M_i$ is non-empty, let $\{S,S'\}$ be any edge in $M_i$. If $M_i$ is empty,
let $S$ be any set in $\C_i$, and put $S'=S$. In either case, \cref{i:Cchifreep,i:Cchimatchedp} 
imply that
\[
  -\chi(S)-\chi(S')-2^{D+1} \beta_{r_i}\bigl(\chi(b_i)-1+\alpha\bigr) 
          \leq 2^{D+1} \Tw_{r_i} - \tfrac{1}{2}\alpha\bigl(\Th+(S)+\Th+(S')\bigr) - 2 R_D.
\]
By the preceding lemma with $\gamma=\tfrac{1}{2}\alpha$ and a bit of algebra, where
nearly all terms cancel, it follows that
\begin{align*}
 -\beta_{r_i}\bigl(\chi(b_i)-1+\alpha\bigr)&\leq \Tw_{r_i}-D+\Delta,
\end{align*}
and the lemma follows since $\Tw_{r_i}-D+\Delta<\beta_{r_i}(1-\alpha)$
by \refconst{b}.
\end{proof}
\begin{lemma}\label{lem:Dbound}
If \cref{i:smallsize,i:Cchifreen,i:Cchifreep} hold, then for every $S\in \C_i$
that is not in $M_i$ and satisfying $\Th(S)>2d-\Delta$, \cref{i:Dbound} holds.
\end{lemma}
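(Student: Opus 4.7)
The plan is to argue by contradiction: I would assume $D := D[S] \geq 4\Tw_{r_i}+1$ and derive a violation of Invariant~\ref{i:Cchifreep}. By the color-flip symmetry built into the invariants, it suffices to handle the case $\eps_i = +1$ (the case $\eps_i = -1$ uses Invariant~\ref{i:Cchifreen} in an entirely analogous way).

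First I would invoke Lemma~\ref{lem:onesided_benign} to conclude $\Th+(S) \leq 2d - \Delta$, so that the hypothesis $\Th(S) > 2d - \Delta$ forces $\Th-(S) > 2d - \Delta$. Combining this with $\Sz(S) \leq d + 1 - 2^D$ from Invariant~\ref{i:smallsize} and the identity $\Th+(S) + \Th-(S) = 2\Sz(S)$, I obtain the key upper bound
\[
\Th+(S) < 2 + \Delta - 2^{D+1},
\]
which becomes strongly negative as soon as $D$ is large.

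Next I would apply Invariant~\ref{i:Cchifreep} to $S$, lower-bounding its left-hand side using only the trivial inequalities $\chi(S) \leq \Th+(S)$ and $\chi(b_i) \leq 1$. Using $\beta_{r_i}\alpha = \Tw_{r_i}$ (since $\alpha = \tfrac{1}{4}$ and $\beta_{r_i} = 4\Tw_{r_i}$) and substituting the bound on $\Th+(S)$, the resulting inequality rearranges to
\[
R_D \leq 2^D(2\Tw_{r_i} - 2 + \alpha) + \bigl(1 - \tfrac{1}{2}\alpha\bigr)(2+\Delta).
\]

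The main obstacle is the algebraic check that this fails once $D \geq 4\Tw_{r_i}+1$. I would use the exact identity $R_D = 2^D(D - 2 - \Delta) + \Delta + 2$ and subtract the right-hand side above; at $D = 4\Tw_{r_i}+1$ the bulk cancels, leaving a positive multiple of $2^{4\Tw_{r_i}+1}(2\Tw_{r_i} + 1 - \Delta - \alpha)$ plus the harmless constant $\tfrac{1}{2}\alpha(2+\Delta)$. By inequality~\refconst{a} one has $\Tw_{r_i} \geq \Delta$, so this excess is strictly positive, contradicting the upper bound on $R_D$ and forcing $D[S] \leq 4\Tw_{r_i}$. In effect the whole argument hinges on arranging the algebra so that the super-exponential growth $R_D \sim D\cdot 2^D$ overtakes the linear-in-$\Tw_{r_i}$ contribution $2^D \cdot 2\Tw_{r_i}$ precisely once $D$ crosses $4\Tw_{r_i}$, which is the reason the factor $4$ appears in the statement of Invariant~\ref{i:Dbound}.
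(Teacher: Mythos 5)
Your proof is correct and follows essentially the same route as the paper's: invoke Lemma~\ref{lem:onesided_benign} to get $\Th-(S)>2d-\Delta$, feed the invariant \ref{i:Cchifreep} with the bounds $\chi(b_i)\leq 1$ and $\Fl(S)\leq\Sz(S)$ (your $\chi(S)\leq\Th+(S)$ is the same inequality), substitute $\Sz(S)\leq d+1-2^{D}$, and compare against $R_D$. The only cosmetic difference is that you frame it as a contradiction and unfold \refconst{c} back to $\beta_r\alpha=\Tw_r$ and \refconst{a} by hand, whereas the paper bounds $\chi(S)-\tfrac{1}{2}\alpha\Th+(S)$ via identities \eqref{eq:Thn}--\eqref{eq:Thp} and finishes by citing \refconst{c} directly; the resulting inequality is algebraically identical.
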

\begin{proof}
Say $\eps_i=+1$, the other case being symmetric. For brevity write $D=D[S]$ and $r=r_i$.
\Cref{lem:onesided_benign} asserts that
$\Th+(S)\leq 2d-\Delta$, and so $\Th(S)>2d-\Delta$ implies that 
\begin{equation}\label{eq:lembndn}\Th-(S)>2d-\Delta.\end{equation} 
We thus have
\begin{align*}
  0&\leq 2^D \beta_{r}\alpha + 2^D \Tw_r - R_D +\chi(S) - \tfrac{1}{2}\alpha \Th+(S) &&\text{from \cref{i:Cchifreep}}\\
   &=2^D \beta_{r}\alpha + 2^D \Tw_r - R_D + (1-\alpha)\Sz(S)+\Fl(S)-(1-\tfrac{1}{2}\alpha)\Th-(S)
        &&\text{using identities \eqref{eq:Thn} and \eqref{eq:Thp}}\\
   &\leq 2^D \beta_{r}\alpha + 2^D \Tw_r - R_D + (2-\alpha)\Sz(S)-(1-\tfrac{1}{2}\alpha)(2d-\Delta)&&\text{from $\Fl(S)\leq \Sz(S)$ and \eqref{eq:lembndn}}\\
   &\leq 2^D \beta_{r}\alpha + 2^D \Tw_r - R_D + (2-\alpha)(d+1-2^D)-(1-\tfrac{1}{2}\alpha)(2d-\Delta)&&\text{from \cref{i:smallsize}}\\
   &= 2^D \bigl(\beta_{r}\alpha + \Tw_r + \Delta + \alpha -D \bigr) -\tfrac{1}{2}\alpha \Delta-\alpha &&\text{from \eqref{eq:Rddef} and some algebra}
\end{align*}
and so $D\leq 4\Tw_r$ by \refconst{c}. Hence \cref{i:Dbound} holds.
\end{proof}

It is easy to check that the algorithm satisfies all the invariants at the initialization stage.
Only \cref{i:Gchip,i:Gchin} require an invocation of \cref{lem:genonesided_benign}
and of \refconst{a}; the other invariants are immediate.

In what follows we assume that the algorithm satisfies 
all the invariants at stage $n-1$, and that our goal is to show
that the algorithm satisfies them at stage $n$. For brevity,
we write $\Sz(S), \Fr(S)$ etc in place of $S^{(n-1)}(S)$, 
$\Fr^{(n-1)}(S)$ etc. We still write $\Sz^{(n)}(S)$ etc in full.

\verif{i:frozen}
The only steps that modify $\chi$ are
\cref{s:round,s:perturb}. They do not unfreeze any elements.

\verif{i:benign}
The only steps that move sets into $\B$
are \cref{s:genbenign,s:cohbenign,s:exhaust}. The moves in 
\cref{s:genbenign,s:cohbenign} are preconditioned on $\Th(S)\leq 2d-\Delta$,
and so \cref{i:benign} holds trivially. The only potential problem
is the movement of $S'$ to $\B$ in \cref{s:exhaust}, which we now tackle.

Assume that $\eps_i=+1$, the other case is symmetric. 
Since $\Sz(S')\leq \Sz(S)$, it follows from the execution 
condition of \cref{s:exhaust} that 
$\Sz(S')\leq d/2$. By \cref{lem:onesided_benign}, $\Th+(S')\leq 2d-\Delta$, so
it remains to prove that $\Th-(S')\leq 2d-\Delta$.

Use of the identity \eqref{eq:Thn}, inequality $\Fl(S')\leq \Sz(S')$,
and  \cref{lem:other_chi_bound} in that order, yields
\begin{align*}
\Th-(S')
        &\leq 2\Sz(S')-\chi(S')
        \leq 2\Sz(S')-\chi(S')-\chi(S)+\tfrac{1}{2}\alpha\Th+(S)+\Delta+1-2^D.\\
\intertext{Applying \cref{i:Cchimatchedp} and using $\Th+(S')+\Th-(S')=2\Sz(S')$ gives}
\Th-(S')&\leq 2\Sz(S')-\tfrac{1}{2}\alpha \Th+(S')+2^{D+1}\beta_{r_i} \alpha+2^{D+1}\Tw_{r_i}-2R_D+\Delta\\
        &=(2-\alpha)\Sz(S')+\tfrac{1}{2}\alpha \Th-(S') +2^{D+1}\beta_{r_i} \alpha+2^{D+1}\Tw_{r_i}-2R_D+\Delta.
\end{align*}
The \cref{i:benign} then follows by 
using $\Sz(S')\leq d/2$, \cref{i:Dbound} and \refconst{i}.

\verif{i:dangerbound} Since $\Th(S)$ is non-increasing by \cref{lem:thmonotone}, 
only the steps that might decrease $\Sz(S)$ need to be examined. These are \cref{s:round,s:perturb}.
So, let us consider a set $S$ such that $\Sz(S)>d$ and $\Sz^{(n)}(S)\leq d$.

We will prove that $\Th+^{(n)}(S)\leq 2d$ (the other case is similar). We may assume that 
$\Th+(S)>2d$, for else we are done by \cref{lem:thmonotone}. By \cref{i:Gchip}, $\chi(S)\leq 0$.
Furthermore, for \cref{s:perturb} we have $\chi^{(n)}(S)=\chi(S)$, whereas
for \cref{s:round} we have $\chi^{(n)}(S)\leq \chi(S)+\alpha$. Hence,
identity \eqref{eq:Thp} implies
\[
  \Th+^{(n)}(S)=\Sz^{(n)}(S)-\Fl^{(n)}(S)+\chi^{(n)}(S)\leq 2\Sz^{(n)}(S)+\alpha\leq 2d+\alpha.
\]
Since $\Th+^{(n)}(S)$ is an integer, we conclude that $\Th+^{(n)}(S)\leq 2d$ as desired.

\verif{i:banner} The only step that adds sets to a cohort, or modifies a
cohort's banner is \cref{s:newcohort}. However, that step clearly respects
\cref{i:banner}.

\verif{i:matching} The only step that adds edges to $M_i$ is
\cref{s:match}. It adds edges only between unmatched sets.

\verif{i:eqmatching} The only step that adds edges to $M_i$ is
\cref{s:match}. It adds edges only between sets with equal value of $D$.
The only steps that change value of $D[S]$ are \cref{s:exhaust,s:newcohort}.
\Cref{s:exhaust} changes $D[S]$ only after having removed the edge
that contains $S$ from $M_i$. \Cref{s:newcohort} is not a problem either, as
the cohort that it creates has empty matching.

\verif{i:smallsize} In view of \cref{i:frozen}, $\Sz(S)$ can only decrease.
Thus, it suffices to verify only the steps that either 
increase the value of $D[S]$ or add sets to $\C_i$. 
These are \cref{s:exhaust,s:newcohort} respectively.\medskip

\Cref{s:newcohort} adds sets with $\Sz(S)\leq d$ and assigns $D[S]=0$ for them,
satisfying \cref{i:smallsize}.\medskip

Consider \cref{s:exhaust}. Since it is that step which is executed, 
and not \cref{s:frozencohort}, it follows that $b_i$ is not frozen,
and so $\Sz(S')\geq 1$. Hence, 
from the inequality \eqref{eq:finishcond} we have
\[
  \Sz(S)\leq d+1-2^{D[S]+1}=d+1-2^{D^{(n)}[S]}. 
\]

\verif{i:otherthreatp,i:otherthreatn} 
By \cref{lem:thmonotone} it suffices 
to check only the steps that move sets to $\C_i$ or increase the value of $D[S]$. These
are \cref{s:exhaust,s:newcohort}. 

By symmetry it suffices to treat only \cref{i:otherthreatp}. 
Note that both in \cref{s:exhaust} and in \cref{s:newcohort}, the sets $S$ 
for which we need to establish \cref{i:otherthreatp}
 satisfy $\Th-^{(n)}(S)>2d-\Delta$. Hence,
\[\Th+^{(n)}(S)=\Sz^{(n)}(S)+\Fr^{(n)}(S)=2\Sz^{(n)}(S)-\Th-^{(n)}(S)<2\Sz^{(n)}(S)-2d+\Delta,\]
and \cref{i:otherthreatp} follows from the validity of \cref{i:smallsize} at stage $n$,
which was proved immediately above.

\verif{i:bannercnt} We need to check only the steps that remove
sets from $\C_i$, as well as creation of new cohorts in \cref{s:newcohort}.
The latter is trivial since  $\abs{\Cb}=W$. The only steps that remove
sets from $\C_i$ are \cref{s:cohbenign,s:frozencohort,,s:exhaust}. Of these,
\cref{s:cohbenign,s:exhaust} move sets from $\C_i$ to $\B$, thus preserving
\cref{i:bannercnt}. \Cref{s:frozencohort} removes sets to $\G$, but
also disbands the cohort. The fact that the step preserves \cref{i:bannercnt}
follows from $\abs{\C_i}\leq W$, which is a consequence of \cref{i:elim}.

\verif{i:elim} We need to check only the steps that either 
increase the value of $D[S]$ or add sets to $\C_i$. 
These are \cref{s:exhaust,s:newcohort}. For \cref{s:exhaust}
we note that $D[S]=D[S']$ by \cref{i:eqmatching},
and so $2^{D^{(n)}(S)}=2^{D[S]+1}=2^{D[S]}+2^{D[S']}$. For
\cref{s:newcohort} the invariant follows from $\abs{\Cb}=W$
and $D[S]=0$.

\verif{i:Gchip,i:Gchin} We only treat \cref{i:Gchip}, for \cref{i:Gchin} is symmetric.
We need to check only the steps that modify $\chi$ or move sets to $\G$. These 
are \cref{s:round,s:perturb,s:frozencohort}. Let $S\in \F$ be arbitrary, 
and let us check that the invariant holds for $S$.\medskip

\Cref{s:perturb} does not change $\chi(S)$, and may only decrease
$\Th+(S)$ and $\Th-(S)$, by \cref{lem:thmonotone}. So, if 
$\Sz(S),\Sz^{(n)}(S)$ are either both greater than $d$, or
are both at most $d$, then the invariant holds at stage $n$
because it held at stage $n-1$. So, consider the
case when $\Sz(S)>d$ and $\Sz^{(n)}(S)\leq d$.
Note that since $\Sz(S)>d$ and \cref{i:Gchip}
held at stage $n-1$, it follows that $\chi(S)\leq 0$.
If $\Th+^{(n)}(S)\leq 2d-\Delta$ then \cref{i:Gchip} holds
vacuously. Otherwise, 
$\Th-^{(n)}(S)<\Delta$ by \cref{lem:genonesided_benign}.
Thus
$\Tw_r-\tfrac{1}{2}\alpha \Th-^{(n)}(S)\geq 0$ by \refconst{a}.
Since $\chi^{(n)}(S)=\chi(S)\leq 0$, \cref{i:Gchip} holds.\medskip

\Cref{s:round} alters $\chi(S)$, $\Sz(S)$, $\Th-(S)$ or $\Th+(S)$ 
only if it rounds an element $x$ that is in $S$. So, assume $x\in S$.
There are two cases according to the sign of $\chi(x)$:
\begin{enumerate}
\item[$+$)] If $\chi(x)>1-\alpha$, then for the condition of \cref{s:round}
to have triggered, we must have either $\Th+(S)\leq 2d-\Delta$
or $\Sz(S)\leq d$. In the former case, $\Th+^{(n)}(S)\leq \Th+(S)$ by
\cref{lem:thmonotone}, and so \cref{i:Gchip} holds vacuously. In the latter
case, $\chi^{(n)}(S)\leq\chi(S)+\alpha$, $\Th+^{(n)}(S)=\Th+(S)$
and $\Th-^{(n)}(S)=\Th-(S)-2$, and the invariant is verified by 
substitution.
\item[$-$)] If $-\chi(x)>1-\alpha$, then $\chi^{(n)}(S)<\chi(S)$,
and $\Th+^{(n)}(S)=\Th+(S)-2$ and $\Th-^{(n)}(S)=\Th-(S)$.
As in the treatment of \cref{s:perturb} above, the only case worthy of attention
is $\Sz(S)=d+1$, $\Sz^{(n)}(S)=d$, and the same argument as above disposes of it.
\end{enumerate}\medskip

We treat \cref{s:frozencohort} next. Suppose cohort $\C_i$ is being dissolved, and
$S\in \C_i$ is an arbitrary set in it. If $\eps_i=+1$, then by \cref{lem:onesided_benign}
\cref{i:Gchip} holds vacuously. So, assume $\eps_i=-1$. If $S$ is not a part of $M_i$,
declare $S'=S$, otherwise let $\{S,S'\}\in M_i$ be the edge containing $S$. In both
cases we conclude (either from \cref{i:Cchifreen} or \cref{i:Cchimatchedn}) that 
\[
  \chi(S)+\chi(S')+2^{D+1} \beta_{r_i}\bigl(\chi(b_i)+1-\alpha\bigr) 
         \leq 2^{D+1} \Tw_{r_i} - \tfrac{1}{2}\alpha\bigl(\Th-(S)+\Th-(S')\bigr) - 2 R_D.
\]
An application of \cref{lem:other_chi_bound} to $-\chi(S')-\tfrac{1}{2}\alpha\Th-(S')$
gives
\begin{equation}\label{eq:frozenchibound}
\begin{aligned}
  \chi(S)&\leq  2^{D+1}\beta_{r_i}\alpha+ 2^{D+1} \Tw_{r_i} +\Delta+2-2^{D+1} - 2 R_D -\tfrac{1}{2}\alpha\Th-(S)\\
         &\leq \Tw_{r_i+2}-\tfrac{1}{2}\alpha \Th-(S),
\end{aligned}
\end{equation}
with the last line holding because of \refconst{j} and \cref{i:Dbound}. 

Let $j$ be the stage when cohort $\C_i$ was created. By the description
of \cref{s:newcohort} we had $\Th+^{(j)}(S)=2d-r_i$ and banner $b_i$ was a floating element. 
So, since $\chi(b_i)=-1$ by \cref{lem:bannersign}, we conclude that
$\Th+(S)\leq 2d-r_i-2$. In view of inequality \eqref{eq:frozenchibound} above, this implies that \cref{i:Gchip} holds.

\verif{i:Cchifreen,i:Cchifreep} We treat \cref{i:Cchifreen}, for \cref{i:Cchifreep}
is symmetric. We need to verify the steps that remove sets 
from $M_i$, modify $D[S]$ or $\chi$, or add sets to $\C_i$. These are 
\cref{s:round,s:exhaust,s:perturb,,s:newcohort}. We handle them in (reverse) order.\medskip

Validity of \cref{i:Cchifreen} after \cref{s:newcohort} follows from 
$\chi(b_i)+1-\alpha\leq 0$, $R_0=0$, $\Sz(S)\leq d$ and the validity of
\cref{i:Gchip} before the step.\medskip

\Cref{s:perturb} does not change the left-hand side of the inequality
in \cref{i:Cchifreen}, and might only increase the right-hand side (by decreasing $\Th-(S)$).\medskip

We treat \cref{s:exhaust} next. We need to consider only the case $\Th(S)>2d-\Delta$, 
for otherwise the invariant holds vacuously. Let $D=D[S]$. Our goal is to bound
\begin{align*}
  Q\eqdef \chi^{(n)}(S)+2^{D^{(n)}[S]} \beta_{r_i}\bigl(\chi(b_i)+1-\alpha\bigr)&=
  \chi(S)+2^{D+1} \beta_{r_i}\bigl(\chi(b_i)+1-\alpha\bigr).
\end{align*}
Since \cref{i:Cchimatchedn} held at stage $n-1$, we conclude that 
\begin{align*}
 Q&\leq 2^{D+1} \Tw_{r_i} - \tfrac{1}{2}\alpha\bigl(\Th-(S)+\Th-(S')\bigr)-\chi(S') - 2 R_D
    &&\text{by \cref{i:Cchimatchedn}}.
\end{align*}
Hence, it suffices to prove that $-\tfrac{1}{2}\alpha\Th-(S')-\chi(S')\leq 2R_D-R_{D+1}$. 
This follows from \cref{lem:other_chi_bound} and
from $2R_D-R_{D+1}=\Delta+2-2^{D+1}$. We are thus done with \cref{s:exhaust}.\medskip

Finally, we check \cref{s:round}. We may assume that
$x$, the element that is rounded, is in $S$. If $\chi(x)<1-\alpha$, then
rounding decreases left-hand side of the inequality in \cref{i:Cchifreen},
and does not affect the right-hand side at all. So, we may assume that
$\chi(x)>1-\alpha$. By \cref{lem:bannersign}
$\chi(b_i)<0$, and so $x\neq b_i$. The rounding thus increases the 
left-hand side of the inequality in \cref{i:Cchifreen} by at most $\alpha$.
Since the right-hand side of the inequality increases by exactly $\alpha$,
we are done.

\verif{i:Cchimatchedn,i:Cchimatchedp} We need to check only
\cref{s:round,s:match,s:perturb} as these are the only steps
that either change $\chi$ or create an edge in $M_i$. The verification
of \cref{s:round,s:perturb} is an almost verbatim repetition of the verification
of \cref{i:Cchifreen,i:Cchifreep} for those steps, and we omit it.
The validity of \cref{i:Cchimatchedn,i:Cchimatchedp} after \cref{s:match}
follows from the validity of \cref{i:Cchifreen,i:Cchifreep} respectively before the step.

\verif{i:Dbound} We need to consider only \cref{s:exhaust}
as it is the only step that increases the value of $D[S]$.
Since \cref{i:smallsize,i:Cchifreen,i:Cchifreep} have been proved above,
the \cref{i:Dbound} follows from \cref{lem:Dbound}.

\paragraph{Proof that the algorithm terminates:}
Let $F$ be the number of frozen elements, and consider the quantity 
\[
  I\eqdef F+ 4\abs{\B}-m+\sum_{i=1}^m \bigl(\abs{M_i}+\abs{\C_i}\bigr).
\] 
We claim that each step other than \cref{s:frozencohort} increases~$I$.
\Cref{s:genbenign} increases $\abs{\B}$; \cref{s:emptycohort} decreases $m$; \cref{s:round} increases $F$;
\cref{s:cohbenign} increases $4\abs{\B}$ by $4$ and decreases $\abs{\C_i}$ by $1$; \cref{s:exhaust} increases $4\abs{\B}$ by 
at least $4$, decreases $\abs{M_i}$ by $1$, and decreases $\abs{\C_i}$ by at most $2$; \cref{s:match} increases $\abs{M_i}$;
\cref{s:perturb} increases $F$; \cref{s:newcohort} increases $\sum_i\abs{\C_i}$ by $W$ and increases $m$ by $1$.  

\Cref{s:frozencohort} is executed only if there are no empty cohorts (\cref{s:emptycohort} cannot be executed), and so $-m+\nobreak\sum \abs{\C_i}\geq 0$. Hence,
after \cref{s:frozencohort} is executed, $I$ is nonnegative. Since other
steps increase $I$, and $I$ is bounded by $\abs{X}+4\abs{\F}$, it follows that \cref{s:frozencohort} must be executed at least once every $\abs{X}+4\abs{\F}$~steps.

After a cohort is disbanded in \cref{s:frozencohort}, no new cohort with the same banner can be
created, as banner is a floating element. Furthermore, since 
any cohort disbanded in \cref{s:frozencohort} is non-empty, 
that step can be executed at most $d$ times for any given value of the banner. So,
\cref{s:frozencohort} can be executed total of at most $d\abs{X}$ times.
In particular, the algorithm terminates after at most $d\abs{X}\bigl(\abs{X}+4\abs{\F}\bigr)$ steps.

Since \cref{s:newcohort} is executed unless $\F=\B$, when the algorithm terminates we have 
$\Th(S)\leq 2d-\Delta$ for all $S\in \F$. While it does not mean that the final
coloring $\chi$ takes only values $-1$ and $+1$, it does imply that no matter
how we round the remaining floating elements, the resulting coloring will
have discrepancy at most $2d-\Delta$.

\section{Creation of a new cohort}\label{sec:newcohort}
In this section we prove a claim made in \cref{s:newcohort}, namely, that if
that step can be executed, then there is a family $\Cb\subset \G$ 
all of whose sets contain a common element $b\in X$, and
that satisfies the right conditions for making a new cohort.
In what follows, we assume that none of the steps \ref{s:genbenign}
through \ref{s:perturb} can be executed, and that all the invariants hold.

\begin{lemma}\label{lem:cohortconstr}
For each cohort $\C_i$ we have
\begin{equation}\label{eq:cohortconstr}
  d\bigl(\abs{\C_i}-\abs{M_i}\bigr)<W+\sum_{S\in\C_i} \bigl(\Sz(S)-1\bigr).
\end{equation}
\end{lemma}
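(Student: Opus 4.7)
My plan is to partition the sum on the right-hand side according to whether a set $S \in \C_i$ belongs to an edge of $M_i$ or is unmatched, and bound $\Sz(S)-1$ below in each case. By the $\chi \mapsto -\chi$ symmetry I assume $\eps_i = +1$.

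For a matched pair $\{S, S'\} \in M_i$ with $D = D[S] = D[S']$, the non-applicability of \cref{s:exhaust} gives $\Sz(S) + \Sz(S') + 2^{D+1} - 2 > d$; since all quantities are integers this upgrades to $(\Sz(S)-1)+(\Sz(S')-1) \geq d + 1 - 2^{D+1}$, so summing over $M_i$ yields a matched contribution of at least $(d+1)\abs{M_i} - \sum_{S\text{ matched}} 2^{D[S]}$. For an unmatched $S$, the non-applicability of \cref{s:cohbenign} combined with \cref{lem:onesided_benign} gives $\Th-(S) > 2d - \Delta$. I then run the manipulation from the proof of \cref{lem:Dbound}: apply \cref{i:Cchifreep} with the banner term bounded via $\chi(b_i) \leq 1$, eliminate $\Th+(S)$ through identity \eqref{eq:Thp}, use $\Fl(S) \leq \Sz(S)$, and solve for $\Sz(S)$ to obtain $\Sz(S) \geq d - (\Delta-1)/2 - (4/7)(2^{D+1}\Tw_{r_i} - R_D)$. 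The non-applicability of \cref{s:match} moreover forces the unmatched sets to have pairwise distinct $D$-values, and \cref{i:Dbound} caps each such $D$ at $4\Tw_{r_i}$.

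Combining the two contributions, the lemma reduces to showing $L_u + T_M < W + \abs{M_i}$, where $L_u \eqdef \sum_{S \text{ unm}}(d+1-\Sz(S))$ and $T_M \eqdef \sum_{S \text{ matched}} 2^{D[S]}$. The unmatched lower bound together with the distinct-$D$-values constraint gives $L_u \leq O(\Tw_{r_i}) T_U + O(\Delta) u$ with $T_U \eqdef \sum_{S \text{ unm}} 2^{D[S]}$ and $u$ the number of unmatched sets; \cref{i:elim} gives $T_M + T_U \leq W$; and \cref{i:Dbound} gives $T_M \leq 2^{4\Tw_{r_i}+1}\abs{M_i}$. The constant inequality \refconst{h} ($W > 2^{8\Tw_r+6}\Tw_r$) then closes the argument in two regimes: when $T_M \leq W/2$, the surplus $W - T_M$ comfortably absorbs $L_u$; when $T_M$ exceeds $W/2$, the bound on $T_M$ in terms of $\abs{M_i}$ forces $\abs{M_i}$ itself to dominate $L_u$.

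The main obstacle will be the unmatched lower bound, since for $D$ up to roughly $2\Tw_{r_i}$ the quantity $2^{D+1}\Tw_{r_i} - R_D$ grows exponentially in $\Tw_{r_i}$ and, summed over the distinct $D$-values, can reach order $\Tw_{r_i}\cdot 2^{4\Tw_{r_i}}$. The tower growth baked into $W$ via \refconst{h} is precisely calibrated so that $W$ still dwarfs this expression.
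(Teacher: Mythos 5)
Your proof is correct and takes essentially the same approach as the paper: the same matched/unmatched decomposition, the same use of the non-applicability of \cref{s:exhaust,s:cohbenign,s:match}, the same reliance on \cref{i:Cchifreep,i:Dbound,i:elim} and \refconst{h} to close. The only cosmetic differences are the exact route to the $\Sz(S)$ lower bound for unmatched sets (you mirror the calculation of \cref{lem:Dbound} directly, where the paper invokes \cref{lem:other_chi_bound}) and the threshold in the final case split (you compare $T_M$ against $W/2$, the paper compares $\abs{M_i}$ against $2^{L+4}\Tw_{r_i}$).
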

\begin{proof} By symmetry assume $\eps_i=+1$.

Because $D[S]=D[S']$ for $\{S,S'\}\in M_i$ by \cref{i:eqmatching} and because
\cref{s:exhaust} cannot be executed, we have
\begin{equation}\label{eq:matchedszbnd}
  \bigl(\Sz(S)-1\bigr)+\bigl(\Sz(S')-1\bigr)\geq d-2^{D[S]}-2^{D[S']}+1\qquad\text{for all }\{S,S'\}\in M_i.
\end{equation}

Let $S\in C_i\setminus M_i$ be any unmatched cohort set, and let $D=D[S]$ for brevity.
Because \cref{s:cohbenign} cannot be executed, we necessarily have $\Th(S)>2d-\Delta$,
which by \cref{lem:onesided_benign} implies that $\Th-(S)>2d-\Delta$,
and so 
\begin{align*}
  2\Sz(S)&\geq \Sz(S)+\Fl(S)&&\text{since }\abs{\Fl(S)}\leq \Sz(S)\\
         &=\Th-(S)+\chi(S)&&\text{because of \eqref{eq:Thn}}\\
         &> 2d-\Delta+\chi(S)=2d-\Delta-\chi(S)+2\chi(S)\\
         &\geq 2d-\Delta-\chi(S)+\alpha\Th+(S)-2^{D+1}\beta_{r_i}\alpha-2^{D+1} \Tw_{r_i}+2R_{D}
             &&\text{by \cref{i:Cchifreep}}\\
         &\geq 2d-2\Delta-2+2^{D+1}-2^{D+1}\beta_{r_i}\alpha-2^{D+1} \Tw_{r_i}+2R_{D}
             &&\text{by \cref{lem:other_chi_bound}}.
\end{align*}
Hence by \refconst{c}
\begin{equation*}
  \Sz(S)-1\geq  d-2^{D[S]+3}\Tw_{r_i}\qquad\text{for all }S\in \C_i\setminus V(M_i).
\end{equation*}

Let $L=4\Tw_{r_i}$.
Since \cref{s:match} cannot be executed, for each $D$ there is at most one set
$S\in \C_i$ satisfying $D[S]=D$ that is not in $M_i$. Also, $D[S]\leq L$
by \cref{i:Dbound}. Thus,
\[
  \sum_{S\in \C_i\setminus V(M_i)} (\Sz(S)-1) \geq \sum_{S\in \C_i\setminus V(M_i)} (d-2^{D[S]+3}\Tw_{r_i})
    \geq d\abs{\C_i\setminus V(M_i)}-2^{L+4}\Tw_{r_i}.
\]
Combining this with \eqref{eq:matchedszbnd} we obtain
\begin{equation}\label{eq:szbnd}
  \sum_{S\in \C_i} \bigl(\Sz(S)-1\bigr) \geq d\bigl(\abs{\C_i}-\abs{M_i}\bigr)-\sum_{S\in V(M_i)} 2^{D[S]}+\abs{M_i}-2^{L+4}\Tw_{r_i}.
\end{equation}
If $\abs{M_i}>2^{L+4}\Tw_{r_i}$, then the lemma follows from \cref{i:elim}. Otherwise,
$\abs{M_i}\leq 2^{L+4}\Tw_{r_i}$, hence $\abs{V(M_i)}\leq 2^{L+5}\Tw_{r_i}$, and so
\[
  \sum_{S\in V(M_i)} 2^{D[S]}+2^{L+4}\Tw_{r_i}\leq 2^L\cdot 2^{L+5}\Tw_{r_i}+2^{L+4}\Tw_{r_i}\leq 2^{2L+6}\Tw_{r_i}<W
\]
by \refconst{h}, and the lemma follows from \eqref{eq:szbnd}.
\end{proof}

\begin{lemma}\label{lem:Gszbnd} The sets in $\G$ satisfy
\begin{equation}\label{eq:Gszbnd}
  \sum_{S\in \G} (\Sz(S)-d)\leq 0,
\end{equation}
and the equality is possible only if there are no cohorts.
\end{lemma}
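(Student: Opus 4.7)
The plan is to combine three ingredients: the obstruction to \cref{s:perturb}, the degree constraint, and the previous lemma together with \cref{i:bannercnt}.

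First I would exploit the assumption that \cref{s:perturb} cannot be executed. This means the number of floating elements is at most $N=\abs{\G}+\sum_{i=1}^m(\abs{\C_i}-\abs{M_i})$. Coupling this with the degree bound (each floating $x\in X$ lies in at most $d$ sets of $\F$) gives the master inequality
\[
  \sum_{S\in\G}\Sz(S)+\sum_{i=1}^m\sum_{S\in\C_i}\Sz(S)+\sum_{S\in\B}\Sz(S)\;\leq\; d\abs{\G}+d\sum_{i=1}^m\bigl(\abs{\C_i}-\abs{M_i}\bigr).
\]
Rearranging, $\sum_{S\in\G}(\Sz(S)-d)$ is bounded above by the difference between $d\sum_i(\abs{\C_i}-\abs{M_i})$ and the cohort and benign contributions.

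Second, I would invoke \cref{lem:cohortconstr} on each cohort to obtain the strict lower bound
\[
  \sum_{S\in\C_i}\Sz(S)\;>\;d\bigl(\abs{\C_i}-\abs{M_i}\bigr)+\abs{\C_i}-W,
\]
which precisely cancels the $d\sum_i(\abs{\C_i}-\abs{M_i})$ term and leaves a residue of $\sum_i(\abs{C_i}-W)$ (which is non-positive).

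Third, I would handle the benign contribution via \cref{i:bannercnt}. Because \cref{s:frozencohort} cannot be executed, every banner $b_i$ is a floating element, and hence contributes $1$ to $\Sz(B)$ for each $B\in\B$ containing it. Summing the invariant over the distinct banner elements (noting that the sums on its right-hand side already collect all cohorts sharing a banner) yields
\[
  \sum_{S\in\B}\Sz(S)\;\geq\;\sum_{i=1}^m\bigl(W-\abs{\C_i}\bigr).
\]
Combining these bounds makes the residue from step two disappear, giving $\sum_{S\in\G}(\Sz(S)-d)\leq 0$, and showing the inequality is strict whenever some cohort exists (because the inequality from \cref{lem:cohortconstr} was strict). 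In the cohort-free case, the master inequality collapses directly to $\sum_{S\in\G}\Sz(S)\leq d\abs{\G}-\sum_{S\in\B}\Sz(S)\leq d\abs{\G}$, allowing equality.

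The only delicate point is the bookkeeping in the banner step: one must be careful that \cref{i:bannercnt} is summed over distinct banner elements (not over cohorts), and must verify that banners of distinct cohorts with the same underlying element are correctly aggregated by the invariant's inner sum, so no double counting occurs. Everything else is arithmetic cancellation.
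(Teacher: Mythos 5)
Your proof is correct and follows essentially the same route as the paper: combine the degree bound and the obstruction to \cref{s:perturb} to get $\sum_{S\in\F}\Sz(S)\leq d\bigl(\abs{\G}+\sum_i(\abs{\C_i}-\abs{M_i})\bigr)$, then bound the cohort sums below via \cref{lem:cohortconstr} (strictly, which gives the ``only if no cohorts'' clause) and the benign sum below via \cref{i:bannercnt} (using that banners are floating since \cref{s:frozencohort} cannot fire). One small slip: after the cohort cancellation the leftover that the benign contribution must absorb is $\sum_i(W-\abs{\C_i})$, which is non-negative, not $\sum_i(\abs{\C_i}-W)$; this doesn't affect the argument since \cref{i:bannercnt} supplies exactly that amount.
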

\begin{proof}
Let $F$ be the number of floating elements, and consider
\[
  dF\geq \sum_{S\in \F}\Sz(S)=\sum_{S\in \G}\Sz(S)+\sum_{S\in \B}\Sz(S)+\sum_{i=1}^m \sum_{S\in\C_i}\Sz(S)=\Sigma_1+\Sigma_2+\Sigma_3.
\]
By \cref{i:bannercnt} we have $\Sigma_2\geq \sum_i (W-\abs{\C_i})$. 
Bounding $\Sigma_3$ by the preceding lemma we obtain
\begin{equation}\label{eq:clu}
  dF\geq \sum_{S\in \G}\bigl(\Sz(S)-d\bigr)+d\abs{\G}+d\sum_{i=1}^m (\abs{\C_i}-\abs{M_i}).
\end{equation}
Furthermore, since the inequality in the preceding lemma is strict, 
the equality in \eqref{eq:clu} can hold only if there are no cohorts.

Finally, since \cref{s:perturb} cannot be executed, $F\leq \abs{\G}+\sum_i (\abs{\C_i}-\abs{M_i})$
and so \eqref{eq:Gszbnd} holds.
\end{proof}

Let
\begin{align*}
  B^+ &= \{ x \text{ floating}: \phantom{-}\chi(x)\geq 1-\alpha\},\\
  B^- &= \{ x \text{ floating}: -\chi(x)\geq 1-\alpha\}.
\end{align*}
The next lemma shows that nasty sets contain many nearly-frozen elements.
\begin{lemma}\label{lem:manyfrozen}
Suppose $S\in\G$ satisfies $\Sz(S)\leq d$. Then the following hold:
\begin{enumerate}
\item[$+$)] If $\Th+(S)>2d-\Delta$, then $\abs{B^-\cap S}\geq d/2$ and $\Sz(S)\geq d-\Delta/2-\Tw_{\Delta-1}$
\item[$-$)] If $\Th-(S)>2d-\Delta$, then $\abs{B^+\cap S}\geq d/2$ and $\Sz(S)\geq d-\Delta/2-\Tw_{\Delta-1}$
\end{enumerate}
\end{lemma}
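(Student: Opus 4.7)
My plan is to begin by using the coloring-flip symmetry to reduce to case~$(+)$, and to set $r\eqdef 2d-\Th+(S)<\Delta$. Since $S\in\G$ and $\Sz(S)\leq d$, the second clause of \cref{i:Gchip} gives $\chi(S)\leq \Tw_r-\tfrac{1}{2}\alpha\Th-(S)$. I would then substitute $\chi(S)=\Th+(S)-\Sz(S)+\Fl(S)$ from identity \eqref{eq:Thp} and eliminate $\Th-(S)=2\Sz(S)-\Th+(S)$ to convert this bound on $\chi(S)$ into a corresponding upper bound on the floating mass,
\[
  \Fl(S)\leq \Tw_r+(1-\alpha)\Sz(S)-(1-\tfrac{1}{2}\alpha)\Th+(S),
\]
which will serve as the workhorse for both conclusions.

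For the size bound I would pair the displayed inequality with the trivial $\Fl(S)\geq -\Sz(S)$. After cancellation this reads $(2-\alpha)\Sz(S)\geq (1-\tfrac{1}{2}\alpha)\Th+(S)-\Tw_r$, i.e.\ $\Sz(S)\geq \Th+(S)/2-\Tw_r/(2-\alpha)$. Since $r<\Delta$ and $2-\alpha>1$, this gives $\Sz(S)>d-\Delta/2-\Tw_{\Delta-1}$, as required.

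For the bound $\abs{B^-\cap S}\geq d/2$, I would refine the lower bound on $\Fl(S)$ by partitioning the floating elements of $S$ according to their membership in $B^-$: each $x\in B^-\cap S$ contributes $\chi(x)\geq -1$, while each floating $x\in S\setminus B^-$ satisfies $\chi(x)>-(1-\alpha)$, whence
\[
  \Fl(S)\geq -\alpha\abs{B^-\cap S}-(1-\alpha)\Sz(S).
\]
Inserting this into the first displayed inequality, using $\Sz(S)\leq d$ and $\Th+(S)>2d-\Delta$, and collecting terms reduces to $\alpha\abs{B^-\cap S}>\alpha d-(1-\tfrac{1}{2}\alpha)\Delta-\Tw_{\Delta-1}$. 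Dividing by $\alpha$ and invoking \refconst{f} delivers the conclusion. I expect no real obstacles in the plan; the argument is essentially mechanical, and the only design point is that the threshold $1-\alpha$ defining $B^-$ has been tuned so that the numerical slack after all cancellations matches \refconst{f} exactly.
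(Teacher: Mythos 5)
Your proposal is correct and follows essentially the same route as the paper: derive a single workhorse inequality bounding $\Fl(S)$ from the relevant clause of \cref{i:Gchip}/\cref{i:Gchin} and the identities \eqref{eq:Thn}--\eqref{eq:Thp}, then instantiate it once with the trivial bound $\Fl(S)\geq -\Sz(S)$ for the size estimate and once with the refined bound $\Fl(S)\geq -\alpha\abs{B^{\mp}\cap S}-(1-\alpha)\Sz(S)$ for the count of near-frozen elements, closing with \refconst{f}. The only cosmetic differences are that you work in the $(+)$-case while the paper works in the $(-)$-case, and you state the workhorse as an upper bound on $\Fl(S)$ rather than as the paper's rearranged form \eqref{eq:pnegnew}; the computations are identical.
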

\begin{proof}
Consider the case $\Th-(S)>2d-\Delta$. The other case is similar.

Define $r$ by $\Th-(S)=2d-r$.
From \eqref{eq:Thn} and \cref{i:Gchin} we deduce that
\[
  \tfrac{1}{2}\alpha \Th+(S)-\Tw_r\leq \chi(S)=\Sz(S)+\Fl(S)-\Th-(S),
\]
and since $\Th-(S)+\Th+(S)=2\Sz(S)$, this implies that
\begin{equation}\label{eq:pnegnew}
  -\Tw_r\leq (1-\alpha)\Sz(S)+\Fl(S)-\bigl(1-\tfrac{1}{2}\alpha\bigr)\Th-(S).
\end{equation}
We then use bounds $\Fl(S)\leq (1-\alpha)\abs{\Sz(S)}+\alpha\abs{B^+\cap S}$,
$\Th-(S)\geq 2d-\Delta$ and $\Sz(S)\leq d$ to obtain
\[
  -\Tw_r\leq -\alpha d+\alpha\abs{B^+\cap S}+(1-\tfrac{1}{2}\alpha)\Delta
\]
Hence $\abs{B^+\cap S}\geq d-\Tw_r/\alpha\geq d/2$ by \refconst{f}.

When combined with $\Fl(S)\leq \Sz(S)$, 
the inequality \eqref{eq:pnegnew} also implies that
\[
  (2-\alpha)\Sz(S)\geq \bigl(1-\tfrac{1}{2}\alpha\bigr)(2d-\Delta)-\Tw_r,
\]
and so $\Sz(S)\geq d-\Delta/2-\Tw_r$.
\end{proof}

We are now ready to demonstrate the claim made in \cref{s:newcohort}.

Define a \emph{charge} for a pair $(x,S)$ where $x\in B^+\cup B^-$ and $S\in \G$
by the following rule:
\[
  \Ch(x,S) = \begin{cases}
     \dfrac{\Sz(S)-d-1}{\abs{S\cap B^+}}&\text{if }x\in S\cap B^+\text{ and }\Sz(S)\leq d\text{ and }\Th-(S)>2d-\Delta,\\[2.3ex]
     \dfrac{\Sz(S)-d-1}{\abs{S\cap B^-}}&\text{if }x\in S\cap B^-\text{ and }\Sz(S)\leq d\text{ and }\Th+(S)>2d-\Delta,\\[2.3ex]   
     \dfrac{\Sz(S)-d}{4\abs{S \cap B^+}}&\text{if }x\in S\cap B^+\text{ and }\Sz(S)\geq d+1,\\[2.3ex]
     \dfrac{\Sz(S)-d}{4\abs{S \cap B^-}}&\text{if }x\in S\cap B^-\text{ and }\Sz(S)\geq d+1,\\[2.3ex]
     0&\text{otherwise}.
  \end{cases}
\]

Since \cref{s:genbenign} cannot be executed, $\Sz(S)\leq d$ implies that either
$\Th-(S)>2d-\Delta$ or $\Th+(S)>2d-\Delta$. Consider any $S\in \G$
satisfying $\Sz(S)\leq d$. By \cref{lem:manyfrozen} 
we have $S\cap (B_+\cup B_-)\neq \emptyset$, and so
\begin{equation}\label{eq:changeundercount}
  \sum_{x\in X} \Ch(x,S) <\Sz(S)-d.
\end{equation}
Inequality \eqref{eq:changeundercount} also clearly holds for $S\in \G$ satisfying $\Sz(S)>d$.
So \eqref{eq:changeundercount} holds for all $S\in \G$.

If $\G=\emptyset$, then the left-hand side of 
inequality \eqref{eq:Gszbnd} would be zero, implying that there are no cohorts,
and so $\F=\B$. Since $\F=\B$ contradicts the assumption that \cref{s:newcohort}
can be executed, we conclude that $\G$ is non-empty.
Hence the inequality \eqref{eq:changeundercount} 
and \cref{lem:Gszbnd} imply 
\[
  \sum_{\substack{x\in X\\S\in\G}}\Ch(x,S)<\sum_{S\in \G} \bigl(\Sz(S)-d\bigr)\leq 0.
\]
So, there is an element $b\in B^+\cup B^-$ such that
\begin{equation}\label{eq:negcharge}
  \sum_{S\in \G}\Ch(b,S)<0.
\end{equation}
Fix such a $b$, and assume that $b\in B^+$ (the case $b\in B^-$ is analogous). Define
\begin{align*}
  \Nb&\eqdef \{S\in \G : b\in S,\ \Sz(S)\leq d\text{ and }\Th-(S)>2d-\Delta\},\\
  \Pb&\eqdef \{S\in \G : b\in S,\ \Sz(S)\geq d+1\text{ and }\Th+(S)>2d-\Delta\}.
\end{align*}
From \eqref{eq:negcharge} and the definition of $\Ch(b,S)$ it follows that
$\Nb$ is non-empty.

Since \cref{s:round} is unable to round $b$, the set $\Pb$ must be non-empty as well. 
We next show that the contribution of a set from $\Pb$ to the sum \eqref{eq:negcharge}
is very large, whereas the contribution of a set from $\Nb$ is very small.
\begin{proposition}\label{prop:neg}
For each $S\in\Nb$ we have $\Ch(b,S)\geq -\frac{\Delta+2\Tw_{\Delta-1}+2}{d}$.
\end{proposition}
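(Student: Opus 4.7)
The plan is to read off the bound directly from the definition of the charge and from \cref{lem:manyfrozen}.

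By definition of $\Nb$ we have $b \in S\cap B^+$, $\Sz(S)\leq d$, and $\Th-(S)>2d-\Delta$. So the relevant branch of the definition of $\Ch$ gives
\[
  \Ch(b,S)=\frac{\Sz(S)-d-1}{\abs{S\cap B^+}}.
\]
The numerator is negative (in fact $\leq -1$), so to \emph{lower}-bound this fraction we need a lower bound on $\Sz(S)-d-1$ (controlling how negative the numerator is) together with a lower bound on $\abs{S\cap B^+}$ (so that we are not dividing by a small positive number).

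Both of these lower bounds are exactly what \cref{lem:manyfrozen}~($-$) supplies. Since $\Th-(S)>2d-\Delta$ and $\Sz(S)\leq d$, the lemma yields $\abs{S\cap B^+}\geq d/2$ and $\Sz(S)\geq d-\Delta/2-\Tw_{\Delta-1}$, hence $\Sz(S)-d-1\geq -\Delta/2-\Tw_{\Delta-1}-1$.

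Combining these two bounds (and using that the numerator is negative so replacing $\abs{S\cap B^+}$ by the smaller quantity $d/2$ only decreases the fraction), we obtain
\[
  \Ch(b,S)\geq \frac{-\Delta/2-\Tw_{\Delta-1}-1}{d/2}=-\frac{\Delta+2\Tw_{\Delta-1}+2}{d},
\]
as required. There is no real obstacle here; the proposition is essentially a one-line consequence of \cref{lem:manyfrozen}, and the constant $\Delta+2\Tw_{\Delta-1}+2$ in the statement was tailored precisely so that this estimate goes through.
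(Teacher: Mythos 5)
Your proof is correct and is exactly the paper's argument — the paper's proof of \cref{prop:neg} is a one-liner citing the formula for $\Ch(b,S)$ and the bounds from \cref{lem:manyfrozen}, and you have simply spelled out the short arithmetic that the paper leaves implicit.
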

\begin{proposition}\label{prop:pos}
For each $S\in\Pb$ we have $\Ch(b,S)\geq \frac{1}{8+4\Delta}$.
\end{proposition}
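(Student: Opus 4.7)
The plan is to bound $\abs{S\cap B^+}$ above by $(\Delta+2)(\Sz(S)-d)$; since $S\in\Pb$ implies $\Sz(S)-d\geq 1$, this immediately gives
\[
\Ch(b,S)=\frac{\Sz(S)-d}{4\abs{S\cap B^+}}\geq\frac{1}{4(\Delta+2)}=\frac{1}{8+4\Delta},
\]
as required.

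To extract this bound on $\abs{S\cap B^+}$, I first control $\Fl(S)$ from above. Since $S\in\G$ has $\Sz(S)\geq d$ and $\Th+(S)>2d-\Delta$, the first case of \cref{i:Gchip} applies and yields $\chi(S)\leq 0$. Rearranging identity \eqref{eq:Thp} as $\Fl(S)=\Sz(S)-\Th+(S)+\chi(S)$ and combining the hypothesis on $\Th+(S)$ with $\chi(S)\leq 0$ gives the upper bound
\[
\Fl(S)<\Sz(S)-2d+\Delta.
\]

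For the matching lower bound, I split $\Fl(S)$ according to the floating elements of $S$: each $x\in S\cap B^+$ contributes $\chi(x)\geq 1-\alpha$, while each other floating element contributes more than $-1$. Hence
\[
\Fl(S)\geq (1-\alpha)\abs{S\cap B^+}-\bigl(\Sz(S)-\abs{S\cap B^+}\bigr)=(2-\alpha)\abs{S\cap B^+}-\Sz(S).
\]
Combining the two estimates gives $(2-\alpha)\abs{S\cap B^+}<2(\Sz(S)-d)+\Delta$. Since $2-\alpha\geq 1$ and $\Sz(S)-d\geq 1$, one has $\Delta\leq\Delta(\Sz(S)-d)$, so $\abs{S\cap B^+}<(\Delta+2)(\Sz(S)-d)$, which is the bound sought.

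The argument is short and direct; the only substantive step is the invocation of \cref{i:Gchip} to rule out $\chi(S)>0$, which is what forces $\Fl(S)$ to be sufficiently negative to cap the number of $S$'s floating elements that can lie in $B^+$. I do not anticipate any real obstacle.
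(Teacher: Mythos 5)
Your proof is correct and follows essentially the same path as the paper's: both bound $\Fl(S)$ from above via identity \eqref{eq:Thp} together with the definition of $\Pb$ and \cref{i:Gchip}, bound $\Fl(S)$ from below by splitting the floating elements of $S$ into those in $B^+$ and the rest, and combine to cap $\abs{S\cap B^+}$ in terms of $\Sz(S)-d$ and $\Delta$. The only difference is cosmetic bookkeeping in the final algebra (you absorb the $\Delta$ into $\Delta(\Sz(S)-d)$ before dividing; the paper divides first and then uses $\Sz(S)-d\geq 1$), and the two yield the same constant $\frac{1}{8+4\Delta}$.
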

\begin{proof}[Proof of \cref{prop:neg}]
This follows from $\Ch(b,S)=\bigl(\Sz(S)-d-1\bigr)/\abs{S\cap B^+}$,
and the bounds on $\Sz(S)$ and on $S\cap B^+$ from \cref{lem:manyfrozen}.
\end{proof}
\begin{proof}[Proof of \cref{prop:pos}]
We show that $\abs{S\cap B^+}$ is very small by bounding $\Fl(S)$ both from 
above and from below: 
\begin{align*}
\Fl(S)&=\Sz(S)+\chi(S)-\Th+(S)&&\text{by identity \eqref{eq:Thp}}\\
      &\leq \Sz(S)+\chi(S)-2d+\Delta,&&\text{since }S\in \Pb\\
      &\leq \Sz(S)-2d+\Delta,&&\text{by \cref{i:Gchip}}\\ 
\Fl(S)&\geq -\Sz(S)+(2-\alpha)\abs{S\cap B^+}.
\end{align*}
Since $\alpha\leq 1$, these two inequalities imply
\[
\abs{S\cap B^+}\leq 2\bigl(\Sz(S)-d\bigr)+\Delta,
\]
and so $\Ch(b,S)\geq \tfrac{1}{8}\cdot \frac{\Sz(S)-d}{\Sz(S)-d+\Delta/2}\geq 
\frac{1}{8(1+\Delta/2)}$.
\end{proof}

From the two propositions and from \eqref{eq:negcharge} we conclude that 
$\abs{\Nb}\geq  \frac{d}{\Delta+2\Tw_{\Delta-1}+2}\cdot\frac{1}{8+4\Delta} \geq \Delta W$ by \refconst{g}. By \cref{i:dangerbound}
we have $2d-\Delta<\Th-(S)\leq 2d$ for all $S\in \Nb$. Hence, by the pigeonhole
principle there is an $0\leq r<\Delta$ and a $\Cb\subset \Nb$ of size $\abs{\Cb}=W$ 
such that $\Th+(S)=2d-r$ for all $S\in\Cb$. This completes the proof of the claim
made in \cref{s:newcohort}.

\bibliographystyle{plain}
\bibliography{beck_fiala_improvement}

\begin{thebibliography}{1}

\bibitem{banaszczyk}
Wojciech Banaszczyk.
\newblock Balancing vectors and {G}aussian measures of {$n$}-dimensional convex
  bodies.
\newblock {\em Random Structures Algorithms}, 12(4):351--360, 1998.

\bibitem{beck_fiala}
J{\'o}zsef Beck and Tibor Fiala.
\newblock ``{I}nteger-making'' theorems.
\newblock {\em Discrete Appl. Math.}, 3(1):1--8, 1981.

\bibitem{bednarchak_helm}
Debe Bednarchak and Martin Helm.
\newblock A note on the {B}eck-{F}iala theorem.
\newblock {\em Combinatorica}, 17(1):147--149, 1997.

\bibitem{chazelle_book}
Bernard Chazelle.
\newblock {\em The discrepancy method}.
\newblock Cambridge University Press, Cambridge, 2000.
\newblock \url{https://www.cs.princeton.edu/~chazelle/pubs/book.pdf}.

\bibitem{helm}
Martin Helm.
\newblock On the {B}eck-{F}iala theorem.
\newblock {\em Discrete Math.}, 207(1-3):73--87, 1999.

\bibitem{matousek_book}
Ji{\v{r}}{\'{\i}} Matou{\v{s}}ek.
\newblock {\em Geometric discrepancy}, volume~18 of {\em Algorithms and
  Combinatorics}.
\newblock Springer-Verlag, Berlin, 2010.
\newblock An illustrated guide, Revised paperback reprint of the 1999 original.

\bibitem{nikolov_relax}
Aleksandar Nikolov.
\newblock The {K}oml{\'o}s conjecture holds for vector colorings.
\newblock \arXiv{1301.4039v2}, August 2013.

\bibitem{spencer_sixdev}
Joel Spencer.
\newblock Six standard deviations suffice.
\newblock {\em Trans. Amer. Math. Soc.}, 289(2):679--706, 1985.

\end{thebibliography}

\end{document}